\documentclass{article}
\usepackage[utf8]{inputenc}
\usepackage[a4paper, total={6in, 8in}]{geometry}
\usepackage{authblk}
\usepackage{amsthm}
\usepackage{amsmath}
\usepackage{amssymb}
\usepackage{graphicx}
\usepackage{bbm}
\usepackage{braket,amsfonts,amssymb}
\usepackage[linesnumbered,ruled]{algorithm2e}
\usepackage{comment}
\usepackage{amsopn}
\usepackage{bm}
\usepackage{mathrsfs}  
\usepackage[mathscr]{euscript}
\usepackage{bm,nicefrac}
\usepackage{extarrows}
\usepackage{caption}
\usepackage{subcaption}
\usepackage{xcolor}
\usepackage[linktoc=all]{hyperref}
\usepackage{titlesec}
\usepackage{authblk}
\hypersetup{
    colorlinks,
    citecolor=blue,
    filecolor=black,
    linkcolor=blue,
    urlcolor=black
}

\def\a{a}
\def\b{b}
\def\bo{{\bf 1}}
\def\cc{Q}
\def\c{c}

\def\e{{\bf e}}

\def\g{g}

\def\n{n}
\def\k{k}

\def\s{s}
\def\t{t}
\def\u{{\bf u}}
\def\v{{\bf v}}
\def\x{{\bf x}}

\def\y{{\bf y}}
\def\z{{\bf z}}
\def\ue{u}
\def\uprod{\tilde{\u}}
\def\uprode{\tilde{\ue}}
\def\uprodinfty{\uprod_\infty}

\def\ve{v}
\def\vprod{\tilde{\v}}
\def\vprode{\tilde{\ve}}
\def\vprodinfty{\tilde{\v}_\infty}

\def\w{{\bf w}}
\def\we{w}
\def\wprod{\tilde{\w}}
\def\wprode{\tilde{\we}}
\def\wprodinfty{\tilde{\w}_\infty}
\def\xe{x}
\def\xprod{\tilde{\bf x}}
\def\xprode{\tilde{\xe}}
\def\xkc{\tilde{\bf x}_{k^c}}
\def\xprodinfty{\xprod_\infty}

\def\ye{y}
\def\ze{z}

\def\A{{\bf A}}
\def\Ae{A}

\def\F{F}

\def\L{L}
\def\M{M}
\def\N{N}
\def\Q{Q}
\def\R{\mathbb{R}}
\def\T{\text{T}}
\def\Spm{S_{\pm}}

\def\0{\boldsymbol{0}}
\def\1{\boldsymbol{1}}

\def\Lquad{\mathcal{L}_{\text{quad}}}
\def\Lover{\mathcal{L}_{\text{over}}}

\def\IzPlus{I_{\z,+}}
\def\IzMinus{I_{\z,-}}

\DeclareMathOperator*{\argmin}{arg\,min}


\renewcommand{\L}{L}

\renewcommand{\epsilon}{\varepsilon}

\newcommand{\rev}[1]{{\color{black} #1}}

\providecommand{\keywords}[1]{\textbf{\textit{Keywords --- }} #1}

\newtheorem{theorem}{Theorem}[section]

\newtheorem{definition}[theorem]{Definition}
\newtheorem{lemma}[theorem]{Lemma}
\newtheorem{corollary}[theorem]{Corollary}
\theoremstyle{remark}
\newtheorem{remark}[theorem]{Remark}

\title{More is Less: Inducing Sparsity via Overparameterization}
\author[1]{Hung-Hsu Chou}
\author[1]{Johannes Maly}
\author[2]{Holger Rauhut}
\affil[1]{Ludwig Maximilian University of Munich, Germany}
\affil[2]{RWTH Aachen University, Germany}

\date{}

\begin{document}

\maketitle



\begin{abstract}
In deep learning it is common to overparameterize neural networks, that is, to use more parameters than training samples. Quite surprisingly training the neural network via (stochastic) gradient descent leads to models that generalize very well, while classical statistics would suggest overfitting. In order to gain understanding of this implicit bias phenomenon we study the special case of sparse recovery (compressed sensing) which is of interest on its own. More precisely, in order to reconstruct a vector from underdetermined linear measurements, we introduce a corresponding overparameterized square loss functional, where the vector to be reconstructed is deeply factorized into several vectors. We show that, if there exists an exact solution, vanilla gradient flow for the overparameterized loss functional converges to a good approximation of the solution of minimal $\ell_1$-norm. The latter is well-known to promote sparse solutions. As a by-product, our results significantly improve the sample complexity for compressed sensing via gradient flow/descent on overparameterized models derived in previous works. The theory accurately predicts the recovery rate in numerical experiments. Our proof relies on analyzing a certain Bregman divergence of the flow. This bypasses the obstacles caused by non-convexity and should be of independent interest.

\end{abstract}

\keywords{Overparameterization, $\ell_1$-minimization, Gradient Flow, Gradient Descent, Compressed Sensing, Implicit Bias, Deep Factorization, Bregman Divergence}


\tableofcontents


\section{Introduction}
\label{sec:Introduction}
Overparameterization is highly successful in learning with deep neural networks. While this empirical finding was likely observed by countless practitioners, it was systematically studied in numerical experiments in \cite{neyshabur2017geometry,neyshabur2015,zhang17}. Increasing the number of parameters far beyond the number of training samples leads to better generalization properties of the learned networks. This is in stark contrast to classical statistics, which would rather suggest overfitting in this scenario. 
The loss function typically has infinitely many global minimizers in this setting (there are usually infinitely many networks fitting the training samples exactly in the overparameterized regime \cite{zhang17}), so that the employed optimization algorithm has a significant influence on the computed solution. The commonly used (stochastic) gradient descent and its variants seem to have an implicit bias towards ``nice'' networks with good generalization properties. It is conjectured that in fact, (stochastic) gradient descent applied to learning deep networks favors solutions of low complexity.
Of course, the right notion of ``low complexity'' needs to be identified and may depend on the precise scenario, i.e., network architecture. In the simplified setting of linear networks, i.e., matrix factorizations, and the problem of matrix recovery or more specifically matrix completion, several works
\cite{arora2018optimization,arora2019implicit,chou2020implicit,geyer2019implicit,Bach2019implicit,Gissin2019Implicit,gunasekar2019implicit,gunasekar2017implicit,neyshabur2017geometry,neyshabur2015,razin2020implicit,Razin2021implicitTensor,Razin2022hierachicalTensor,soudry2018implicit,stoger2021small} identified the right notion of low complexity to be low rank of the factorized matrix (or tensor, in some these works). Nevertheless, despite initial theoretical results, a fully convincing theory is not yet available even for this simplified setting.

As a contribution to this line of research, this article studies the performance of overparameterized models in the context of the classical compressed sensing problem, where the goal is to recover an unknown high-dimensional signal $\x^\star \in \R^\N$ from few linear measurements of the form
\begin{align} \label{eq:CS}
    \y = \A \x^\star \in \R^\M,
\end{align}
where $\A \in \R^{\M\times \N}$ models the measurement process. Whereas this is in general not possible for $\M < \N$, the seminal works \cite{candes2006robust,candes2006stable,donoho2006compressed} showed that unique reconstruction of $\x^\star$ from $\A$ and $\y$ becomes feasible if $\x^\star$ is $\s$-sparse, $\A$ satisfies certain conditions (e.g. restricted isometry property), and $\M$ scales like $\s\log(\N/\s)$. While it is well-known that
$\x^\star$ can be recovered via $\ell_1$-minimization, i.e.,
\begin{align}
    \label{eq:BP}
    \x^\star = \argmin_{\A\z = \y} \| \z \|_1,
\end{align}
also known as basis pursuit, surprisingly enough, we observe similar recovery with gradient descent on overparameterized models, which originate from a very different background. (Here in the following, $\|\cdot\|_p$ denotes the standard $\ell_p$-norm for $1 \leq p \leq \infty$, and $\x_0>0$ means all entries of $\x_0$ are strictly positive.) Our goal is to understand this connection via gradient flow -- a continuous version of gradient descent -- in this context.

In order to gain a first understanding of the power of overparameterization, we compare the quadratic loss (without overparameterization)
\begin{align}\label{eq:Loss_quad}
    \Lquad(\x) := \frac{1}{2}\|\A \x-\y\|_2^2
\end{align}
and its overparameterized versions
\begin{align} \label{eq:L_over}
    \Lover\big(\x^{(1)}, \dots, \x^{(\L)}\big)
    & := \frac{1}{2}\Big\|\A\big(\x^{(1)}\odot \cdots\odot \x^{(\L)}\big)-\y\Big\|_2^2, \\
    \Lover^{\pm} \big(\u^{(1)}, \dots, \u^{(\L)}, \v^{(1)}, \dots, \v^{(\L)}\big)
    & := \frac{1}{2}\Big\|\A\Big( \bigodot_{\k=1}^\L \u^{(\k)} - \bigodot_{\k=1}^\L \v^{(\k)}  \Big)-\y\Big\|_2^2,\label{eq:L_over_refined}
\end{align}
where $\odot$ is the Hadamard (or entry-wise) product. It turns out that the results from minimizing different loss functions via gradient flow/descent are quite different. Being initialized with $\x(0) = \0$, on the quadratic loss $\Lquad$ gradient flow and descent converge to the least-squares solution
\begin{align*}
    \x_\infty
    := \lim_{\t\to\infty} \x(\t)
    = \argmin_{\A \z = \y} \|\z\|_2\;,
\end{align*}
which is unrelated to the sparse ground-truth $\x^\star$ in general. On the other hand, gradient flow and descent applied to $\Lover$ and $\Lover^{\pm}$ often lead to sparse results, despite the absence of explicit regularization. This can be viewed as a particular instance of implicit bias/regularization. In this work, we restrict ourselves to gradient flow under identical initialization --- all factors $\x^{(k)}$ resp. $\u^{(k)}$ and $\v^{(k)}$ are initialized with the the same vector --- and formally define the gradient flow for $\Lover$ as
\begin{align*}
    \rev{\frac{d}{d\t}}\x^{(\k)}(t) = -\nabla_{\x^{(\k)}}\Lover(\x^{(1)}(t),\hdots,\x^{(L)}(t)), \qquad \x^{(\k)}(0) = \x_0>0, \quad \k = 1,\hdots,L.
\end{align*}
(The gradient flows $\u^{(\k)}(t)$ and $\v^{(\k)}(t)$ for $\Lover^\pm$ are defined similarly.) Due to the identical initialization, the factors $\x^{(k)}$ resp. $\u^{(k)}$ and $\v^{(k)}$ stay identical over time, which relates to the so-called balancedness condition \cite{arora2019implicit,Razin2021implicitTensor}. This allows us to simplify the functionals $\Lover$ and $\Lover^\pm$ to the \emph{reduced factorized loss} functions
\begin{align}
    \mathcal{L}(\x) &:= \frac{1}{2}\|\A\x^{\odot L} -\y\|_2^2 \label{eq:L} \\
    \mathcal{L}^\pm(\u,\v) &:= \frac{1}{2}\|\A(\u^{\odot \L} - \v^{\odot L}) - \y\|_2^2 \label{eq:L_refined},
\end{align}
where $\z^{\odot L} = \z \odot \cdots \odot \z$ denotes the $L$-th Hadamard power of a vector. Let us mention that this common simplification already appeared in \cite{Gissin2019Implicit,woodworth2020kernel}. For the sake of completeness, we provide the derivation in Appendix~\ref{sec:ModelReduction}. 

Our results show that, under the above initialization, the product $\xprod(t):=\bigodot_{\k=1}^\L \x^{(\k)}$ (and $\bigodot_{\k=1}^\L \u^{(\k)} - \bigodot_{\k=1}^\L \v^{(\k)}$) converges to an approximate solution of the $\ell_1$-minimization problem \eqref{eq:CS}, provided that the initialization parameter $\alpha > 0$ is sufficiently small. Hence, in situations where $\ell_1$-minimization is known to successfully recover sparse solutions also overparameterized gradient flow will succeed. In particular, conditions on the restricted isometry property or the null space property on $\A$, 
tangent cone conditions and dual certificates on $\A$ and $\x^*$ ensuring recovery of $s$-sparse vectors 
transfer to overparameterized gradient flow. For instance, a random Gaussian matrix $\A \in \R^{M \times N}$ ensures recovery of $s$ sparse vectors via overparameterized gradient flow for $M \sim s \log(N/s)$. We refer the interested reader to the monograph \cite{foucart2013compressed} for details on compressed sensing.

Our main result improves on previous work: For $\L=2$, it was shown in \cite{li2021implicit} that gradient 
descent for $\mathcal{L}^\pm$ defined in \eqref{eq:L_refined} converges to an $s$-sparse $\x^*$ if $\A$ satisfies a certain coherence assumption, which requires at least $M \gtrsim s^2$ measurements. In \cite{vaskevicius2019implicit}, it was shown for general $L \geq 2$ that 
$\mathcal{L}^\pm$ converges to an $s$-sparse $\x^*$ if the restricted isometry constant $\delta_s$ of $\A$ (see below) essentially satisfies 
$\delta_s \leq c/\sqrt{s}$. This condition can only be satisfied if $M \gtrsim s^2 \log(N/s)$, see \cite{foucart2013compressed}. Hence, our result significantly reduces the required number of measurements, in fact, down to the optimal number. However, we note that
\cite{li2021implicit,vaskevicius2019implicit} work with gradient descent, while we use gradient flow. Extending our result to gradient descent is left to future work.

\subsection{Main results}

As discussed above, we consider vanilla gradient flow on the reduced factorized models \eqref{eq:L} and \eqref{eq:L_refined}. We show that if the solution space is non-empty, gradient flow converges to a solution of \eqref{eq:CS} whose $\ell_1$-norm is close to the minimum among all solutions. (Here $\x_0>0$ means all entries of $\x_0$ are strictly positive.) We now state our main result.

\begin{theorem}
\label{theorem:L1_equivalence}
    Let $\L\geq 2$, $\A\in\mathbb{R}^{\M\times\N}$, and $\y\in\mathbb{R}^{\M}$. For the reduced loss function $\mathcal L^{\pm}$ defined in \eqref{eq:L_refined}
    let $\u(\t)$ and $\v(\t)$ follow the flow
    \begin{align*}
        &\u'(\t) = -\nabla_{\u} \mathcal L^{\pm}\big(\u, \v \big),\qquad
        \u(0)=\u_0>0,\\
        &\v'(\t) = -\nabla_{\v} \mathcal L^{\pm}\big(\u, \v \big),\qquad
        \v(0)=\v_0>0.
    \end{align*}
    Let $\uprod=\u^{\odot\L}$, $\vprod=\v^{\odot\L}$. Suppose the solution set $S = \{\z \in \R^\N :\A\z = \y\}$ is non-empty. Then  
    \begin{equation}\label{xprod-lim}
        \xprodinfty:= \lim_{\t\to\infty}
    \left(\uprod(t) - \vprod(t) \right)
    \end{equation}
    exists and is contained in $S$.
    
    Moreover, denote the {\bf signed weighted} $\ell_1$-norm of $\z$ with weights $(\w_+,\w_-)$ by
    \begin{equation*}
        \|\z\|_{(\w_+,\w_-),1} = \|\w_+\odot\z_+ + \w_-\odot\z_-\|_1
    \end{equation*}
    where $\z_+,\z_-$ are the positive and the negative part of $\z$, respectively. Let $\gamma = 1-\frac{2}{\L}\in[0,1)$. Consider
    \begin{align*}
        \w_+ = \uprod(0)^{\odot-\gamma}
        &,\quad\w_- = \vprod(0)^{\odot-\gamma},\\
        \beta_{1} = \|\uprod(0)\|_{\w_+,1} + \|\vprod(0)\|_{\w_-,1}
        &,\quad\beta_{\min}=\min_{\n\in[\N]}\we_\n\min(\uprode_\n(0),\vprode_\n(0)).
    \end{align*}
    Let $\Q := \min_{\z\in S} \|\z\|_{(\w_+,\w_-),1}$. Suppose $\Q>\beta_1$. Then $\|\xprodinfty\|_{(\w_+,\w_-),1}-\Q \leq \epsilon\Q$, where $\epsilon$ is defined as\rev{
    \begin{align}\label{eq:L1min_general}
        \epsilon:=\begin{cases}
            \frac{\log(\beta_{1}/\beta_{\min})}{\log(\Q/\beta_{1})} &\text{if }\L=2,\\[6pt]
            \frac{\L(\beta_{1}^{\gamma} - \beta_{\min}^{\gamma})}{ 2(\Q^{\gamma}-\beta_{1}^{\gamma})}&\text{if }\L>2.
        \end{cases}
    \end{align}
    }    
\end{theorem}
The proof of Theorem \ref{theorem:L1_equivalence} is given in Section \ref{sec:Gradient_Flow_General}.
\begin{remark}
One may interpret Theorem \ref{theorem:L1_equivalence} from the following perspectives.
\begin{itemize}
    \item \textbf{Effect of depth.}
    Consider the case $\L>2$. According to \eqref{eq:L1min_general}, when $\beta_{\min}\ll\beta_{1}\ll\Q$ are fixed, the error bound $\epsilon$ scales like $\beta_{1}^{1-\frac{2}{\L}}$. This indicates that the shrinkage effect of $\epsilon$ with respect to $\L$ increases as $\L$ increases.
    \item \textbf{Effect of weight.}
    For $\L=2$, we simply get $\ell_1$-minimization, while for $\L>2$ we get weighted $\ell_1$-minimization, where the weights purely depend on the initialization. More importantly, we can put different weights for the positive and negative component with respect to the same entry. If $\uprod(0) = \vprod(0)$, then we have regular weighted minimization. If furthermore $\uprod(0)=\alpha \bo$ for some $\alpha$, then we have un-weighted minimization.
    \item \textbf{Effect of $\beta_{\min}$.}
    Note that when $\beta_{\min}\ll\beta_{1}$ (which holds for large $\N$), the effect of $\beta_{\min}$ in \eqref{eq:L1min_general} becomes insignificant for $\L>2$. Hence without much loss we can upper bound $\epsilon$ by
    \begin{equation*}
        \frac{\L}{2}\cdot\frac{\beta_{1}^{1-\frac{2}{\L}}}{\Q^{1-\frac{2}{\L}}-\beta_{1}^{1-\frac{2}{\L}}},
    \end{equation*}
    which no longer depends on $\beta_{\min}$. On the other hand, the effect of $\beta_{\min}$ remains significant for $\L=2$ and cannot be removed easily. This indicates that when there is randomness in initialization, the error for $\L>2$ might be easier to bound than for the case where $\L=2$.
    \item \textbf{Generality.}
    As previously discussed in Section \ref{sec:Introduction}, Theorem \ref{theorem:L1_equivalence} applies to the general loss function $\Lover^\pm$ in \eqref{eq:L_over_refined} as well as long as all factors $\u^{(k)}$ and $\v^{(k)}$ are initialized identically with $\u_0$ and $\v_0$.
    \item \textbf{Scale invariance.}
    Equation \eqref{eq:L1min_general} provides an explicit non-asymptotic bound that is scale invariant in the following sense. Let $\lambda>0$ be a scaling factor. If we replace $\u_0$, $\v_0$, and $\y$ with $\lambda\u_0$, $\lambda\v_0$, and $\lambda^\L\y$, then $\beta_{1}$, $\beta_\infty$, and $Q$ will each be scaled by $\lambda^\L$ and the right-hand side of \eqref{eq:L1min_general} remains unaffected under the scaling.
    
    \item \textbf{Convergence rate.}
    While decreasing the initialization scale leads to decreasing approximation error, empirically it makes the gradient descent converge slower. Hence there is a trade-off between accuracy and computational complexity. 
\end{itemize}
\end{remark}

\subsection{Application to compressed sensing}

As a consequence of Theorem~\ref{theorem:L1_equivalence}, accurately reconstructing (approximately) sparse vectors from incomplete linear measurements (compressed sensing) can provably be achieved via gradient flow on $\Lover$, using the minimal amount of measurements. This suggests that gradient descent on overparametrized least-square functionals can be viewed as an alternative algorithm for compressed sensing. To be concrete let us provide example results in this direction. A matrix $\A \in \R^{\M \times \N}$ is said to satisfy the stable null space property of order $\s$ and constant $\rho \in (0,1)$ if for all vector $\v \in \ker (\A) \setminus \{0\}$ and all index sets $S \subset \{1,\hdots,\N\}$ of cardinality at most $\s$ it holds
\[
\|\v_S\|_1 \leq \rho \|\v_{S^c}\|_1,
\]
where $\v_S$ denotes the restriction of $\v$ to the entries in $S$ and $S^c$ is the complement of $S$. It is well-known that various types of random matrices satisfy the stable null space property
in an (almost) optimal parameter regime with high probability,
see for instance \cite{brugiapaglia2021sparse,foucart2013compressed,mendelson2018improved} and references therein. For instance, for a Gaussian random matrix $\A$ this is true provided that 
\begin{equation}\label{M-bound-Gaussian}
\M \gtrsim \rho^{-2} \s \log(e\N/\s).
\end{equation}
Moreover, let us also introduce the error of best $s$-term approximation in $\ell_1$ of a vector $x$ as
\[
\sigma_s(\x)_1 =  \min_{\z : \|\z\|_0 \leq \s} \|\x - \z\|_1,
\]
where $\|\z\|_0 = \#\{\ell : z_\ell \neq 0\}$ denotes the sparsity of $\z$. Clearly, $ \sigma_\s(\x)_1 = 0$ if $\x$ is $\s$-sparse.

The following result is a straightforward implication of  \cite[Theorem 4.14]{foucart2013compressed} together with Theorem \ref{theorem:L1_equivalence}.
We also refer to Figure \ref{fig:ContributionSection} and Section \ref{sec:Numerical_Experiment}
for further illustration.
\begin{corollary} \label{cor:CS}
    Let $\x_* \in \mathbb R^{\N}$ and $\y = \A\x_*$, where $\A \in \mathbb R^{\M \times \N}$ satisfies the stable null space property of order $s$ with constant $\rho\in(0,1)$. Let $\xprodinfty$ be the limit vector of gradient flow as in \eqref{xprod-lim} with initialization $\u_0=\v_0>0$. Then
    \begin{align*}
        \|\xprodinfty - \x_*\|_1
        \leq  \frac{1+\rho}{1-\rho}\left(\epsilon\|\x_*\|_1 +  2\sigma_{s}(\x_*)_1 \right),
    \end{align*}
    where $\epsilon$ is defined in \eqref{eq:L1min_general}.
\end{corollary}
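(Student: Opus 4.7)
The corollary is a direct combination of Theorem~\ref{theorem:L1_equivalence} with the classical basis pursuit stability estimate that is equivalent to the stable null space property. The only external fact I would use is \cite[Theorem 4.14]{foucart2013compressed}: whenever $\A$ satisfies the stable null space property of order $\s$ with constant $\rho\in(0,1)$, any two vectors $\z,\x\in\R^\N$ with $\A\z=\A\x$ obey
\begin{equation*}
    \|\z-\x\|_1 \leq \frac{1+\rho}{1-\rho}\bigl(\|\z\|_1-\|\x\|_1+2\sigma_\s(\x)_1\bigr).
\end{equation*}

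First, I would apply Theorem~\ref{theorem:L1_equivalence} under the symmetric initialization $\u_0=\v_0$, so that the weights satisfy $\w_+=\w_-$; specializing further to $\u_0=\v_0=\alpha\bo$ turns the signed weighted $\ell_1$-norm in Theorem~\ref{theorem:L1_equivalence} into a scalar multiple of the ordinary $\ell_1$-norm, so that the weighted minimum $\cc$ becomes $\cc=\min_{\z\in S}\|\z\|_1$ with $S=\{\z:\A\z=\y\}$. Theorem~\ref{theorem:L1_equivalence} then simultaneously delivers $\xprodinfty\in S$ and
\begin{equation*}
    \|\xprodinfty\|_1 \leq (1+\epsilon)\,\cc.
\end{equation*}
Since $\y=\A\x_*$ we have $\x_*\in S$ as well, so $\cc\leq\|\x_*\|_1$, and combining these two bounds gives the key one-line estimate
\begin{equation*}
    \|\xprodinfty\|_1-\|\x_*\|_1 \leq \epsilon\,\cc \leq \epsilon\,\|\x_*\|_1.
\end{equation*}

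Finally, I would plug $\z=\xprodinfty$ and $\x=\x_*$ into the null space property bound above (both lie in $S$, hence $\A\xprodinfty=\A\x_*=\y$) and substitute the inequality just obtained into its right-hand side. This yields exactly
\begin{equation*}
    \|\xprodinfty-\x_*\|_1 \leq \frac{1+\rho}{1-\rho}\bigl(\epsilon\|\x_*\|_1+2\sigma_\s(\x_*)_1\bigr),
\end{equation*}
which is the claim. There is no genuine obstacle in the argument; the only point that requires a little care is matching the \emph{unweighted} $\ell_1$-norm of the corollary to the signed weighted $\ell_1$-norm of Theorem~\ref{theorem:L1_equivalence}, which is done by using the homogeneous initialization $\u_0=\v_0=\alpha\bo$ (or, more generally, by reading the corollary with the appropriate weighted $\ell_1$-norm whenever the initialization is non-uniform). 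Beyond that, everything is a bookkeeping-level assembly of Theorem~\ref{theorem:L1_equivalence} and a textbook compressed sensing bound.
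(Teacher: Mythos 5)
Your proposal is correct and is exactly the argument the paper intends: the paper gives no separate proof, stating only that the corollary ``is a straightforward implication of \cite[Theorem 4.14]{foucart2013compressed} together with Theorem~\ref{theorem:L1_equivalence},'' which is precisely the combination you carry out. Your remark about reconciling the unweighted $\ell_1$-norm with the signed weighted norm of Theorem~\ref{theorem:L1_equivalence} (via $\u_0=\v_0=\alpha\bo$) is a fair and correct reading of a detail the corollary's statement glosses over.
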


We can also show stability with respect to noise on the measurements assuming a certain quotient property of the measurement matrix \cite{foucart2013compressed} in addition to the null space property. To be precise, we say that $\A \in \R^{\M \times \N}$ satisfies the $\ell_1$-quotient property with constant $d$ relative to the $\ell_2$-norm if for all $\e \in \R^\M$, there exists $\u \in \mathbb{C}^\N$ with $\A \u = \e$
such that 
\[
\| \u\|_1 \leq d \sqrt{\s_*} \|\e\|_2 \quad \mbox{ with } s_* = \M/(\log(e\N/\M).
\]
Gaussian random matrices satisfy this property for an absolute constant $d$ with high probability, see \cite[Chapter 11]{foucart2013compressed} for details. 

Moreover, we require the following strengthened version of the null space property. A matrix $\A \in \mathbb{R}^{\M \times \N}$ satisfies the $\ell_2$-robust null space property with constants $0 < \rho < 1$ and $\tau > 0$ of order $s$ (with respect to $\ell_2$) if, for any subset $S \subset [N]$ of cardinality $s$,
\begin{equation}\label{robust-nsp}
\|\v_S\|_2 \leq \frac{\rho}{\sqrt{s}} \|\v_{S^c}\|_1 + \tau \|\A \v\|_2 \quad \mbox{ for all } \v \in \R^\N. 
\end{equation}
Again, Gaussian random matrices satisfy this property with appropriate absolute constants $\tau$ and $\rho$ with high probability under \eqref{M-bound-Gaussian}.

The following theorem, which combines Theorem \ref{theorem:L1_equivalence} with \cite[Theorem 11.12]{foucart2013compressed}, establishes robustness under noise for the reconstruction of $\x_*$ via gradient flow on the overparameterized functional $\Lover^\pm$ in \eqref{eq:L_over_refined}.

\begin{theorem}\label{thm:noisy-cs} 
Let $c > 1$ be fixed. Let $\A \in \R^{\M \times \N}$ be a matrix satisfying the $\ell_2$-robust null space property with constants $0 \leq \rho < 1$ and $\tau > 0$ of order $s=cs_* = c \M/\log(e\N/\M)$, and the $\ell_1$-quotient property with respect to the $\ell_2$-norm with constant $d > 0$. For $\x_* \in \R^N$ let $\y = \A \x_*+\e$ for some noise vector $\e \in \R^M$. Let $\xprodinfty$ be the limit \eqref{xprod-lim} of the gradient flow for $\Lover^\pm$ with initialization $\u_0=\v_0>0$. Then
\begin{align*}
    \|\xprodinfty - \x_*\|_2 \leq \frac{C}{\sqrt{\s}}(\epsilon\|\x_*\|_1 + 2 \sigma_s(\x_*)_1 ) + C'\|\e\|_2.
    \|\xprodinfty - \x_*\|_1.
\end{align*}
As before, $\epsilon$ is defined in \eqref{eq:L1min_general}. The constants $C,C'>0$ only depend on $\rho, \tau, c, d$.
\end{theorem}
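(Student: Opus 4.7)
The plan is to regard $\xprodinfty$ as an approximate minimizer of the unweighted $\ell_1$-problem $\min\{\|\z\|_1 : \A\z = \y\}$ and then run the classical stability argument for noisy basis pursuit (essentially \cite[Theorem~11.12]{foucart2013compressed}), with a small modification that absorbs the multiplicative $\epsilon$-slack delivered by Theorem~\ref{theorem:L1_equivalence}. Because $\u_0 = \v_0$, the weights $\w_+$ and $\w_-$ from Theorem~\ref{theorem:L1_equivalence} coincide and are proportional to $\bo$, so the signed weighted $\ell_1$-norm reduces to a positive scalar multiple of $\|\cdot\|_1$ and the ratio $\|\xprodinfty\|_1/Q$ equals the weighted one. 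In particular, Theorem~\ref{theorem:L1_equivalence} gives $\A\xprodinfty = \y$ and $\|\xprodinfty\|_1 \leq (1+\epsilon)Q$ with $Q = \min_{\A\z = \y}\|\z\|_1$.

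Next, I would feed in the $\ell_1$-quotient property to control $Q$. It produces $\u \in \R^\N$ with $\A\u = \e$ and $\|\u\|_1 \leq d\sqrt{s_*}\|\e\|_2$, so $\A(\x_* + \u) = \y$ and hence
$$Q \leq \|\x_* + \u\|_1 \leq \|\x_*\|_1 + d\sqrt{s_*}\|\e\|_2.$$
Combining with the previous step yields the approximate optimality
$$\|\xprodinfty\|_1 \leq (1+\epsilon)\bigl(\|\x_*\|_1 + d\sqrt{s_*}\|\e\|_2\bigr).$$

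Third, I would set $\v := \xprodinfty - \x_*$, which satisfies $\A\v = \e$, and take $S$ to index the $s$ largest entries of $\x_*$ in modulus. The triangle inequality applied to $\|\xprodinfty\|_1 = \|\x_* + \v\|_1$, combined with the previous estimate, produces a cone-type inequality of the shape
$$\|\v_{S^c}\|_1 \leq \|\v_S\|_1 + 2\sigma_s(\x_*)_1 + \epsilon\|\x_*\|_1 + C_1\sqrt{s_*}\,\|\e\|_2,$$
in which the new summand $\epsilon\|\x_*\|_1$ is precisely the extra slack from Theorem~\ref{theorem:L1_equivalence} and $C_1$ depends only on $d$. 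Plugging this into the $\ell_2$-robust null space property \eqref{robust-nsp} applied to $\v$, together with $\|\v_{S^c}\|_2 \leq \|\v_{S^c}\|_1/\sqrt{s}$ and the scaling $s = c s_*$ that lets $\sqrt{s_*}/\sqrt{s}$ contribute only to the constants, yields the claimed $\ell_2$ estimate after a short rearrangement.

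The main obstacle is careful bookkeeping: one has to verify that the multiplicative $(1+\epsilon)$-factor in the $\ell_1$ bound is cleanly translated into an additive $\epsilon\|\x_*\|_1$ correction in the cone inequality, while the cross-terms $\epsilon\sigma_s(\x_*)_1$ and $\epsilon\|\e\|_2$ get absorbed into the dominant quantities (using that without loss of generality $\epsilon$ is bounded by an absolute constant). Apart from this bookkeeping, the argument is the proof of \cite[Theorem~11.12]{foucart2013compressed} applied essentially verbatim to $\xprodinfty$; the only new ingredient is the approximate minimality $\|\xprodinfty\|_1 \leq (1+\epsilon)Q$ from Theorem~\ref{theorem:L1_equivalence}, which replaces the exact $\ell_1$-minimality exploited in the reference.
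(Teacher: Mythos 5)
Your proposal follows essentially the same route as the paper: both arguments feed the approximate $\ell_1$-minimality $\|\xprodinfty\|_1 \leq (1+\epsilon)\Q$ from Theorem~\ref{theorem:L1_equivalence} and the $\ell_1$-quotient vector $\u$ with $\A\u=\e$, $\|\u\|_1 \leq d\sqrt{\s_*}\|\e\|_2$ into the classical stability argument for noisy basis pursuit. The only structural difference is bookkeeping: the paper applies the black-boxed consequence of the robust null space property (\cite[Theorem 4.25]{foucart2013compressed}) to the pair $(\x_*+\u,\,\xprodinfty)$ and therefore also needs the simultaneous $(\ell_2,\ell_1)$-quotient property to control $\|\u\|_2$, whereas you apply the null space property directly to $\v = \xprodinfty-\x_*$ (for which $\A\v=\e$), so the measurement term enters through $\tau\|\A\v\|_2$ and the $\ell_2$-bound on $\u$ is not needed. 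One caveat: the intermediate inequality $\|\v_{S^c}\|_2 \leq \|\v_{S^c}\|_1/\sqrt{\s}$ is false in general (it fails already for a $1$-sparse $\v_{S^c}$); passing from the cone condition to the $\ell_2$-bound requires the standard sorting/block-decomposition argument, which is exactly what the proof of \cite[Theorem 4.25]{foucart2013compressed} that you invoke supplies, so this is a slip in the write-up rather than a gap in the strategy.
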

For $\M \times \N$ Gaussian random matrices $\A$ the assumptions of the theorem are satisfied with high probability if $\M \geq C \s \log(e \N/s)$ for an appropriate constant $C$ (depending on the other constants $\rho,\tau,d,c$).
The proof of this theorem is contained in Appendix \ref{subsec:Noise robust compressed sensing}.
\begin{figure}[t]
    \centering
    \begin{subfigure}[b]{0.32\textwidth}
        \centering
        \includegraphics[width = \textwidth]{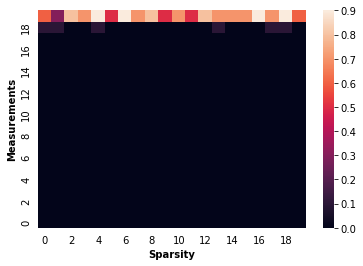}
        \subcaption{$\mathcal{L}_{\text{quad}}$ minimization \eqref{eq:Loss_quad} via GD}
    \end{subfigure}
    \hfill
    \begin{subfigure}[b]{0.32\textwidth}
        \centering
        \includegraphics[width = \textwidth]{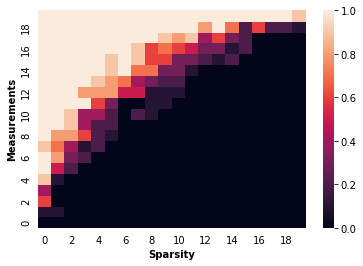}
        \subcaption{$\ell_1$ minimization \eqref{eq:BP}}
    \end{subfigure}
    \hfill
    \begin{subfigure}[b]{0.32\textwidth}
        \centering
        \includegraphics[width = \textwidth]{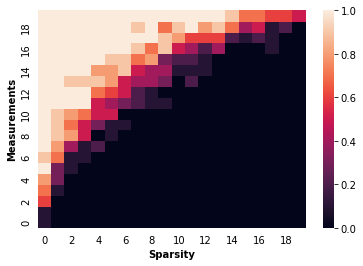}
        \subcaption{$\Lover^{\pm}$ minimization \eqref{eq:L_over_refined} via GD}
    \end{subfigure}
    \caption{We compare the recovery probability for different method via heatmaps. The horizontal axis is the sparsity level $s$, and vertical axis is the number of measurements $\M$. The result (a) shows that we cannot expect any recovery by using the naive quadratic loss. Note that our model (c) achieves similar, and in fact arguably better performance than the well known (b) basis pursuit method.}
    \label{fig:ContributionSection}
\end{figure}
%

\subsection{Related work}

Since training deep neural (linear) networks via gradient descent is connected to matrix factorization, see, e.g., \cite{Bach2019implicit,chou2020implicit,gunasekar2019implicit}, many works focus on understanding implicit bias in this setting. The corresponding results on matrix factorization and matrix sensing are of a similar flavor and show an implicit low-rank bias of gradient flow/descent when minimizing factorized quadratic losses \cite{arora2018optimization,arora2019implicit,chou2020implicit,geyer2019implicit,Bach2019implicit,Gissin2019Implicit,gunasekar2019implicit,gunasekar2017implicit,neyshabur2017geometry,neyshabur2015,razin2020implicit,soudry2018implicit,stoger2021small,wang2022large,wu2021implicit}. It is noteworthy that the existing matrix sensing results, e.g., \cite{stoger2021small}, require $\mathcal{O}(r^2 n)$ measurements to guarantee reconstruction of rank-$r$ $n\times n$-matrices via gradient descent, i.e., they share the sub-optimal sample complexity of \cite{li2021implicit,vaskevicius2019implicit}. For comparison, for low-rank matrix reconstruction by conventional methods like nuclear-norm minimization, only $\mathcal{O}(r n)$ measurements are needed. For a more detailed discussion of the literature on matrix factorization/sensing via overparametrization, we refer the reader to \cite{chou2020implicit}.

Most of the above mentioned works consider the case of $\L=2$ layers, whereas the literature for general $\L>2$ is scarce. In terms of proof method,
\cite{wu2021implicit} is most related to ours among the above works. In the setting of matrix sensing, the authors show that mirror flow/descent exhibits an implicit regularization when the mirror map is the spectral (hyper)entropy. Their analysis heavily relies on the concept of Bregman divergence. Furthermore, they draw a connection between gradient descent on symmetric matrix factorization ($\L = 2$) and the mirror descent without factorization, cf.\ \cite{azulay2021implicit,gunasekar2017implicit,gunasekar2021mirrorless}. Although having started from a different perspective, we realized that the quantity that we called \textbf{\textit{solution entropy}} in the first version of our paper can be viewed as a Bregman divergence. With this observation, we were able to further improve our work and remove technical assumptions on $\A$. Despite certain similarities in the proof strategy, the authors of \cite{wu2021implicit}, who also authored the closely related works \cite{wu2020continuous,wu2021hadamard}, implicitly concentrate on the case $\L=2$ by only considering two specific Bregman divergences (one for the symmetric and one for the asymmetric case, which need to be distinguished in the matrix setting). In contrast, our work provides suitable Bregman divergences for all $\L \ge 2$ and as such allows to analyze gradient descent on overparamterized loss functions with arbitrary depth. Let us emphasize that the deeper case $\L\geq 3$ has not been covered by any existing work to the best of our knowledge.

Due to the strong coupling of weights in factorized matrix sensing,
several existing works restrict themselves to the vector case. In \cite{li2021implicit,vaskevicius2019implicit} the authors derive robust reconstruction guarantees for gradient descent and the compressed sensing model \eqref{eq:CS}. Whereas in \cite{vaskevicius2019implicit} the authors consider the case where $\L=2$ and the sensing matrix $\A$ satisfies the restricted isometry property, \cite{li2021implicit} extends the results to $\L\ge 2$ under a coherence assumption for $\A$. In \cite{hoff2017lasso} the author shows that solving the LASSO is equivalent to minimizing an $\ell_2$-regularized overparameterized functional (for $\L = 2$) and uses this to solve LASSO by alternating least-squares methods. Although the author also considers deeper factorization, the presented approach leads to different results since the overparameterized functional is equivalent to $\ell_{\frac{2}{\L}}$-norm instead of $\ell_1$-norm minimization, for $\L > 2$. The subsequent work \cite{zhao2019implicit} builds upon those ideas to perform sparse recovery with gradient descent by assuming a restricted isometry property of $\A$. Nevertheless, the presented results share the sub-optimal sample complexity discussed above.

Instead of specific initialization, in \cite{woodworth2020kernel} the authors examine the limits of gradient flow when initialized by $\alpha \w_0$, for any $\w_0$. They show that for large $\alpha>0$ the limit of gradient flow approximates the least-square solution, whereas for $\alpha>0$ small it approximates an $\ell_1$-norm minimizer. While the authors discuss more general types of initialization, their proof strategy is fundamentally different from ours and has certain shortcomings. They need to \textit{assume} convergence of the gradient flow and obtain only for $\L=2$ non-asymptotic bounds on the initialization magnitude required for implicit $\ell_1$-regularization. In contrast, we actually \textit{show} convergence of gradient flow and provide non-asymptotic bounds for all $\L$ leading to less restrictive assumptions on the initialization magnitude. 
%

\subsection{Outline}

%
Sections \ref{sec:Gradient_Flow_Positive} and \ref{sec:Gradient_Flow_General} are dedicated to proving Theorem \ref{theorem:L1_equivalence}. Section \ref{sec:Gradient_Flow_Positive} illustrates the proof strategy in a simplified setting of positive solutions, whereas Section \ref{sec:Gradient_Flow_General} extends the proof to full generality. Finally, we present in Section~\ref{sec:Numerical_Experiment} numerical evidence supporting our claims and conclude with a brief summary/outlook on future research directions in Section \ref{sec:Summary_Future_Direction}.
%

\subsection{Notation}

%
For $\N\in\mathbb{N}$, we denote $[\N]=\{1,2,\dots,N\}$. Boldface lower-case letters like $\x$ represent vectors with entries $\xe_n$, while boldface upper-case letters like $\A$ represent matrices with entries $\Ae_{mn}$. For $\x,\y\in\mathbb{R}^\N$, $\x\geq \y$ means that $\xe_n\geq \ye_n$, for all $n \in [\N]$. We use $\odot$ to denote the Hadamard product, i.e., the vectors $\x \odot \y$ and $\x^{\odot p}$ have entries $(\x\odot \y)_n = \xe_n \ye_n$ and $(\x^{\odot p})_n = \xe_n^p$, respectively. We abbreviate $\xprod := \bigodot_{\k\in[\L]} \x^{(\k)} = \x^{(1)}\odot \cdots\odot \x^{(L)}$. The logarithm is applied entry-wise to positive vectors, i.e., $\log(\x) \in \R^\N$ with $\log(\x)_n = \log(\xe_n)$. For convenience we denote $\mathbb{R}_+^{\N} = \{\x\in\mathbb{R}^\N: \xe_n\geq 0 \;\, \forall n\in[\N]\}$.


\section{Positive Case}
\label{sec:Gradient_Flow_Positive}

To show Theorem \ref{theorem:L1_equivalence}, we are going to prove in this section the following simplified version, Theorem~\ref{theorem:L1_equivalence_positive}, that treats the model in \eqref{eq:L} and is restricted to the positive orthant (recall that gradient flow applied to \eqref{eq:L} preserves the entry-wise sign of the iterates).
Since analyzing $\mathcal L$ instead of $\mathcal L^\pm$ comes with less notational load, this approach facilitates to digest the core argument. The proof of Theorem~\ref{theorem:L1_equivalence} is then a straight-forward adaption that only requires technical fine tuning and will be discussed in Section~\ref{sec:Gradient_Flow_General}. Let us define the set of non-negative solutions
\begin{align} \label{def:Splus}
    S_+ = \{\z\geq 0:\A\z = \y\}
\end{align}
and note that Theorem \ref{theorem:L1_equivalence_positive} can be easily adapted to other orthants by changing the signs of corresponding entries of the initialization vector.

\begin{theorem}[Equivalence to $\ell_1$-minimization, positive case]\label{theorem:L1_equivalence_positive}
    Let $\L\geq 2$, $\A\in\mathbb{R}^{\M\times\N}$ and $\y\in\mathbb{R}^{\M}$ and assume that $S_+$ defined in \eqref{def:Splus} is non-empty. For the reduced loss function $\mathcal L$ in \eqref{eq:L}, define $\x(\t)$ via
    \begin{equation}\label{eq:dynamic_positive}
    \x'(t) = -\nabla_{\x} \mathcal L(\x(t)), \qquad \x(0) = \x_0>0.
    \end{equation}
    Let $\xprod=\x^{\odot L}$.Then the limit $\xprodinfty:= \lim_{\t\to\infty}
    \xprod(t)$
    exists and $\xprodinfty \in S_+$.
    
    Moreover, denote the weighted $\ell_1$-norm of $\z$ with weight $\w$ by $\|\z\|_{\w,1} := \|\w\odot\z\|_1$ and consider 
    \begin{align*}
        \w = \xprod(0)^{\odot\frac{2}{\L}-1},\quad
        \Q := \min_{\z\in S_+}\|\z\|_{\w,1},\quad
        \beta_{1} = \|\xprod(0)\|_{\w,1},\quad
        \beta_{\min} = \min_{\n\in[\N]}\we_\n\xprode_\n(0).
    \end{align*}
    \rev{Suppose $\Q>\beta_1$}, then $\|\xprodinfty\|_{\w,1}-\Q\leq\epsilon\Q$, where $\epsilon$ is defined the same way as \eqref{eq:L1min_general} in Theorem \ref{theorem:L1_equivalence}.
\end{theorem}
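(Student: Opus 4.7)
My plan is to exploit the mirror-flow structure of the dynamics and then convert the induced implicit-regularisation statement into the quantitative weighted-$\ell_1$ bound.

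First, I would rewrite the flow in terms of $\xprod(t) = \x(t)^{\odot\L}$. Because $\nabla_{\x}\mathcal{L}(\x) = \L\,\x^{\odot\L-1}\odot\A^\T r$ with $r = \A\xprod - \y$, the chain rule yields $\xprod'(t) = -\L^2\,\xprod(t)^{\odot(2-2/\L)}\odot\A^\T r(t)$, which is exactly a mirror flow $\tfrac{d}{dt}\nabla\phi(\xprod(t)) = -\A^\T r(t)$ for the separable convex potential $\phi(\z) = \sum_n \phi_\L(\ze_n)$ determined by $\phi_\L''(z) = z^{-2+2/\L}/\L^2$; explicitly $\phi_\L(z) = (z\log z - z)/4$ for $\L = 2$ and $\phi_\L(z) = -z^{2/\L}/(2(\L-2))$ for $\L > 2$. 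Since $\phi_\L'(0^+) = -\infty$ and $\x_0 > 0$, positivity of $\xprod(t)$ is preserved by the flow, so we stay in the domain of $\phi$.

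Second, I would use this structure to prove convergence to $S_+$ and to characterise the limit. For any $\z \in S_+$, a direct computation gives
\[
\tfrac{d}{dt}D_\phi(\z,\xprod(t)) \;=\; \langle\A^\T r(t),\z-\xprod(t)\rangle \;=\; \langle r(t),\y-\A\xprod(t)\rangle \;=\; -\|r(t)\|_2^2 \;\le\; 0,
\]
so $D_\phi(\z,\xprod(t))$ is non-increasing; combined with $D_\phi(\z,\cdot)\ge 0$ this traps the trajectory in a Bregman sublevel set and forces $\|r\|_2 \in L^2(0,\infty)$. Subsequential limits therefore lie in $S_+$, and a standard mirror-descent argument (any two such limits $\xi_1, \xi_2 \in S_+$ must both satisfy $D_\phi(\xi_i,\xprod(t))\to 0$, hence coincide by strict convexity of $\phi_\L$) yields existence of $\xprodinfty = \lim_{t\to\infty}\xprod(t)\in S_+$. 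Passing to the limit in the range identity $\nabla\phi(\xprod(t))-\nabla\phi(\xprod(0)) \in \mathrm{range}(\A^\T)$, the KKT conditions for Bregman projection give
\[
\xprodinfty \;=\; \argmin_{\z\in S_+}\, D_\phi\bigl(\z,\xprod(0)\bigr).
\]

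Third, I would turn this Bregman minimality into the explicit bound. Plugging in $\phi$ and discarding additive constants, for $\L>2$,
\[
D_\phi(\z,\xprod(0)) \;=\; \tfrac{1}{\L(\L-2)}\|\z\|_{\w,1} \;-\; \tfrac{1}{2(\L-2)}\sum_n \ze_n^{2/\L} \;+\; \mathrm{const}.
\]
Taking any $\z^*\in S_+$ with $\|\z^*\|_{\w,1} = \cc$ and using $D_\phi(\xprodinfty,\xprod(0))\le D_\phi(\z^*,\xprod(0))$ rearranges to
\[
\|\xprodinfty\|_{\w,1} - \tfrac{\L}{2}\sum_n \xprodinftye_n^{2/\L} \;\le\; \cc - \tfrac{\L}{2}\sum_n (z^*_n)^{2/\L}.
\]
Hölder with exponents $\L/2$ and $\L/(\L-2)$ applied to $\sum_n \xprodinftye_n^{2/\L} = \sum_n(\we_n\xprodinftye_n)^{2/\L}\we_n^{-2/\L}$, using $\we_n^{-2/(\L-2)} = \xprode_n(0)^{2/\L}$ and $\sum_n\xprode_n(0)^{2/\L} = \beta_1$, gives the upper bound $\sum_n\xprodinftye_n^{2/\L}\le \|\xprodinfty\|_{\w,1}^{2/\L}\beta_1^\gamma$; a matching lower bound $\sum_n(z^*_n)^{2/\L}\ge \cc^{2/\L}\beta_{\min}^\gamma$ follows from the pointwise floor $\we_n\xprode_n(0)\ge\beta_{\min}$ and super-additivity of $z\mapsto z^{2/\L}$. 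Substituting and setting $y = \|\xprodinfty\|_{\w,1}/\cc \ge 1$, I would then exploit $y^{2/\L}\le y$ (valid because $2/\L \le 1$) to reduce everything to the linear inequality $y(1 - A) \le 1 - B$ with $A = \L\beta_1^\gamma/(2\cc^\gamma)$ and $B = \L\beta_{\min}^\gamma/(2\cc^\gamma)$; under the hypothesis $A<1$, equivalent to the stated condition on $\cc$, this rearranges precisely to the claimed $\epsilon$. The $\L = 2$ case runs in parallel with $\phi$ the negative entropy: the log-sum (Gibbs) inequality $\sum_n\xprodinftye_n\log(\xprodinftye_n/\xprode_n(0))\ge \|\xprodinfty\|_1\log(\|\xprodinfty\|_1/\beta_1)$ replaces Hölder and $\log\xprode_n(0)\ge\log\beta_{\min}$ supplies the $\beta_{\min}$ correction, producing the logarithmic $\epsilon$.

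The main obstacle will be the second step, i.e.\ establishing convergence and the mirror-descent characterisation on the open positive orthant. Since $\phi_\L'$ is singular at the boundary while the Bregman minimiser typically lies on $\partial\R^\N_+$ (reflecting the sparsity of weighted-$\ell_1$ minimisers), one needs to justify boundedness of the trajectory despite $\phi$ being non-coercive in the naïve sense, ensure that the range-$(\A^\T)$ certificate on $\nabla\phi$ survives the limit when some coordinates vanish, and interpret the argmin on the relative closure of $S_+$. Once these boundary subtleties are handled, the remaining Hölder/algebraic manipulations in Step 3 are calibration.
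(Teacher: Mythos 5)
Your proposal is correct in substance and shares the paper's backbone: the flow is analyzed through the Bregman divergence of the potential $\F$ (your $\phi$ agrees with \eqref{eq:Breman_positive} up to a harmless factor of $\L$), the identity $\partial_\t D_{\F}(\z,\xprod(\t))=-\|\A\xprod(\t)-\y\|_2^2$ drives convergence, and the limit is characterized as the Bregman projection of $\xprod(0)$ onto $S_+$. Where you genuinely diverge is the final quantitative step. The paper proves a two-sided sandwich (Lemma \ref{lemma:KKT_g_z}) by explicitly optimizing $g_{\x}$ over the weighted-$\ell_1$ sphere --- extreme points for the supremum, Lagrange multipliers over all supports $I$ for the infimum --- and then applies a tangent-line (convexity) bound to $\tilde{\g}$. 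You instead get the same sandwich in one line each from H\"older with exponents $\L/2$ and $\L/(\L-2)$ (upper bound on $\sum_n \xprodinftye_n^{2/\L}$, with the paper's infimum being exactly the equality case of your H\"older step) and from superadditivity of $t\mapsto t^{2/\L}$ plus the pointwise floor $\we_n^{-2/\L}=(\we_n\xprode_n(0))^{\gamma}\ge\beta_{\min}^{\gamma}$; your subsequent reduction via $y^{2/\L}\le y$ lands on exactly the stated $\epsilon$ for $\L>2$. This is arguably cleaner and avoids the case analysis over supports. Regarding the boundary subtleties you flag as the main obstacle: the paper sidesteps the need for the range-$(\A^\T)$ certificate to survive the limit by observing that $\Delta_{\z}=D_{\F}(\z,\xprod(0))-D_{\F}(\z,\xprodinfty)$ is \emph{constant} over $\z\in S_+$ (since its time derivative does not depend on $\z$), whence $D_{\F}(\z,\xprod(0))\ge\Delta_{\xprodinfty}=D_{\F}(\xprodinfty,\xprod(0))$ directly; boundedness of the trajectory follows from coercivity of $\xprod\mapsto D_\F(\z,\xprod)$ (Lemma \ref{lem:Boundedness}). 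Adopting that argument would close the gap you identify.

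Two minor calibration points. First, for $\L=2$ your sketched linearization (dropping $\log y\ge 0$) yields a denominator $\log(\Q/\beta_1)-1$ rather than the stated $\log(\Q/\beta_1)$; using the tangent-line bound $\tilde{\g}(y\Q,\beta_1)-\tilde{\g}(\Q,\beta_1)\ge\partial_1\tilde{\g}(\Q,\beta_1)(y\Q-\Q)$ at $\Q$, as the paper does, recovers the exact constant. Second, your condition $A<1$ amounts to $\Q>\c_\L\beta_1$, which is what the positivity of the denominator $2\Q^{\gamma}-\L\beta_1^{\gamma}$ actually requires; the theorem's literal hypothesis $\Q>\c_\L\beta_1^{2/\L}$ is not scale-invariant and appears to be a typo in the statement, so your version is the right one.
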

We will now start to introduce lemmas and theorems to prove Theorem \ref{theorem:L1_equivalence_positive}.


\subsection{Bregman divergence}
\label{subsec:Bregman_Divergence_positive}


As a crucial ingredient for Theorem \ref{theorem:L1_equivalence_positive}, we show convergence of $\x(\t)^{\odot \L}$ and characterize its limit. In order to state the corresponding result, we introduce the function
\begin{equation}\label{eq:g}
    g_{\x}(\z) = \begin{cases}
    \langle \z, \log(\z) -\bo  - \log(\x) \rangle  & \text{if }\L = 2,\\
    \langle \z, \x^{\odot \frac{2}{\L}-1} \rangle-\frac{\L}{2}\|\z\|_{2/\L}^{2/\L}  & \text{if }\L > 2.
    \end{cases}
\end{equation}

\begin{theorem}\label{theorem:Bregman_positive}
For $\L\geq 2$, let $\xprod(\t) = \x(\t)^{\odot\L}$ and
\begin{align} \label{eq:dynamicsBregman_positive}
    \x'(\t)
    =-\nabla \mathcal L (\x(\t))
    =-\L \left[\A^{\T}( \A\x^{\odot \L}(\t) -\y)\right] \odot \x^{\odot \L-1}(\t)
\end{align}
with $\x(0)\geq 0$. Assume that $S_+$ in \eqref{def:Splus} is non-empty. Then $\xprodinfty:= \lim_{\t\to\infty} \xprod(\t)$ exists and
\begin{align}\label{eq:optimal_x_positive}
    \xprodinfty
    = \argmin_{\z\in S_+} g_{\xprod(0)}(\z).
\end{align}
\end{theorem}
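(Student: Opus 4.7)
The plan is to recognize the flow of $\xprod(t) = \x(t)^{\odot \L}$ as a \textbf{mirror flow} on $\R_+^{\N}$ and to identify $g_{\xprod(0)}$ with the corresponding Bregman potential, so that $\xprodinfty$ emerges as the unique Bregman projection of $\xprod(0)$ onto $S_+$. Concretely, differentiating $\xprod = \x^{\odot \L}$ and substituting \eqref{eq:dynamicsBregman_positive} yields $\dot{\xprod} = -\L^2\, \xprod^{\odot 2 - 2/\L} \odot \A^{\T}(\A\xprod - \y)$, which I rewrite as
\begin{equation*}
    \frac{d}{dt}\nabla\Phi(\xprod(t)) = -\A^{\T}(\A\xprod(t) - \y), \qquad \Phi(z) = \sum_{n} \phi(z_n),
\end{equation*}
where $\phi''(u) = \frac{1}{\L^2}\, u^{2/\L - 2}$. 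Integrating twice produces $\phi(u) = \frac{1}{4}(u\log u - u)$ for $\L=2$ and $\phi(u) = -\frac{1}{2(\L-2)}\, u^{2/\L}$ for $\L>2$, modulo an affine term that has no effect on the Bregman divergence $D_\Phi(z,x) = \Phi(z)-\Phi(x)-\langle \nabla\Phi(x), z-x\rangle$. A direct algebraic computation shows $D_\Phi(z,\xprod(0)) = c_\L\, g_{\xprod(0)}(z) + \text{const}(\xprod(0))$ with $c_\L > 0$, so minimizers of $D_\Phi(\cdot,\xprod(0))$ and of $g_{\xprod(0)}$ over $S_+$ coincide. I denote this common minimizer $\xprod_\star$; existence follows from continuity and coercivity of both functionals on $S_+$.

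The central Lyapunov identity is then the standard mirror-flow relation: for any $z \in S_+$,
\begin{equation*}
    \frac{d}{dt} D_\Phi(z, \xprod(t)) = -\big\langle \nabla^2\Phi(\xprod)\,\dot{\xprod},\, z - \xprod\big\rangle = \big\langle \A^{\T}(\A\xprod - \y),\, z - \xprod\big\rangle = -\|\A\xprod - \y\|_2^2 \leq 0.
\end{equation*}
Applied with $z = \xprod_\star$ this delivers three things at once: (i) $D_\Phi(\xprod_\star, \xprod(t))$ is bounded, which via the barrier behavior of $\phi$ at $0$ and its super-linear growth at $\infty$ keeps $\xprod(t)$ in a bounded subset of $\R_+^{\N}$, hence convergent subsequences exist; (ii) $\int_0^\infty \|\A\xprod-\y\|_2^2\,dt < \infty$, which combined with monotonicity of $\mathcal L$ along the flow forces $\mathcal L(\xprod(t)) \to 0$, so every subsequential limit $\tilde{\x}$ of $\xprod(t)$ lies in $S_+$; (iii) integrating the mirror-flow identity gives $\nabla\Phi(\xprod(t)) - \nabla\Phi(\xprod(0)) = -\A^{\T}\int_0^t (\A\xprod - \y)\,ds \in \mathrm{Range}(\A^{\T})$ for all $t$, which in the limit is exactly the first-order optimality condition characterizing the Bregman projection of $\xprod(0)$ onto $S_+$.

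To upgrade subsequential convergence to convergence of the whole flow I introduce $H(t) := D_\Phi(\xprod_\star, \xprod(t)) - D_\Phi(\tilde{\x}, \xprod(t))$. Applying the Lyapunov identity to both $z=\xprod_\star$ and $z=\tilde{\x}$ shows $\dot H \equiv 0$, so $H$ is conserved. Passing to the limit along a subsequence $t_k \to \infty$ with $\xprod(t_k) \to \tilde{\x}$ gives $D_\Phi(\tilde{\x}, \xprod(t_k)) \to 0$ and therefore
\begin{equation*}
    D_\Phi(\xprod_\star, \xprod(0)) - D_\Phi(\tilde{\x}, \xprod(0)) = D_\Phi(\xprod_\star, \tilde{\x}) \geq 0.
\end{equation*}
Since $\xprod_\star$ is the minimizer, $D_\Phi(\xprod_\star, \xprod(0)) \leq D_\Phi(\tilde{\x}, \xprod(0))$, so $D_\Phi(\xprod_\star, \tilde{\x})=0$, and strict convexity of $\Phi$ forces $\tilde{\x} = \xprod_\star$; hence the whole flow converges. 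The main technical obstacle I expect lies precisely in these passages to the limit: $\nabla\Phi$ is a barrier that blows up on $\partial\R_+^{\N}$, so when $\xprod_\star$ or $\tilde{\x}$ has zero coordinates the continuity of $D_\Phi(z,\cdot)$ at such boundary points must be argued carefully, using that the Lyapunov bound keeps $\xprod(t)$ bounded away from $\partial\R_+^{\N}$ exactly in those coordinates where $z$ is strictly positive.
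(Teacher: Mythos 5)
Your proposal is correct and follows essentially the same route as the paper: the potential $\Phi$ you construct is (up to the harmless factor $1/\L$) exactly the paper's $\F$ from \eqref{eq:Breman_positive}, your Lyapunov identity is the paper's Lemma \ref{lemma:Bregman_nonincreasing}, your boundedness/coercivity step is Lemma \ref{lem:Boundedness}, and your conserved quantity $H$ is the paper's observation that $\Delta_{\z}=D_{\F}(\z,\xprod(0))-D_{\F}(\z,\xprodinfty)$ is constant over $\z\in S_+$. The only differences are organizational (you identify the minimizer first and show all subsequential limits coincide with it, whereas the paper proves convergence first and then extracts optimality), plus your auxiliary range-condition observation, which is not load-bearing and is the one place where the barrier blow-up you flag would actually bite.
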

The proof of this theorem is based on the Bregman divergence defined as follows.
\begin{definition}[Bregman Divergence]\label{def:Bregman_Divergence}
Let $\F:\Omega\to\mathbb{R}$ be a continuously-differentiable, strictly convex function defined on a closed convex set $\Omega$. The Bregman divergence associated with $\F$ for points $p,q\in\Omega$ is defined as
\begin{equation}
    D_{\F}(p,q) = \F(p) - \F(q) - \langle \nabla \F(q), p-q \rangle.
\end{equation}
\end{definition}
By strict convexity of $\F$ it is straight-forward to verify the following.
\begin{lemma}[\cite{Bregman1967}]
   The Bregman divergence $D_\F$ is non-negative and, for any $q \in \Omega$, the function $p \mapsto D_F(p,q)$ is strictly convex.
\end{lemma}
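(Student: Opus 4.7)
The plan is to derive both claims directly from the first-order characterization of differentiable convex functions: for any continuously differentiable convex $\F$ on a convex set $\Omega$, one has
\begin{equation*}
\F(p) \geq \F(q) + \langle \nabla \F(q), p - q \rangle \qquad \text{for all } p,q \in \Omega,
\end{equation*}
with strict inequality whenever $\F$ is strictly convex and $p \neq q$. This inequality is the standard subgradient characterization restricted to smooth convex functions, and it is the only nontrivial fact I would invoke.

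For non-negativity, I would just rearrange the displayed inequality into $\F(p) - \F(q) - \langle \nabla \F(q), p - q \rangle \geq 0$, which is precisely $D_\F(p,q) \geq 0$. The strict-convexity hypothesis even upgrades this to a strict inequality for $p \neq q$, so that as a by-product $D_\F(p,q) = 0$ forces $p = q$ — a fact that is used implicitly later when the Bregman divergence is employed to identify limits.

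For strict convexity of the map $p \mapsto D_\F(p,q)$ at fixed $q$, I would observe that, as a function of $p$ alone, it equals $\F(p)$ shifted by an affine function of $p$, namely $-\F(q) - \langle \nabla \F(q), p - q \rangle$. Since adding or subtracting an affine function preserves strict convexity (the Hessian, or equivalently the Jensen inequality, is unaffected), the map inherits strict convexity directly from $\F$.

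The hard part here is essentially nothing; the lemma is a direct bookkeeping consequence of the definition of $D_\F$ combined with the classical first-order convexity inequality. The only items worth checking are that the standing hypotheses (continuous differentiability and strict convexity of $\F$ on the convex domain $\Omega$) license the subgradient inequality at $q$ and that strict convexity is preserved under affine perturbations — both standard. This is also why the statement is credited to \cite{Bregman1967} and offered without a detailed argument in the paper.
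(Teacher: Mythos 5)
Your argument is correct and is exactly the standard verification the paper has in mind when it says the lemma is ``straight-forward to verify'' from strict convexity of $\F$ (the paper itself gives no written proof): non-negativity is the first-order convexity inequality rearranged, and strict convexity in $p$ holds because $D_\F(\cdot,q)$ differs from $\F$ by an affine function.
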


To prove Theorem \ref{theorem:Bregman_positive}, we use the Bregman divergence with respect to $\F: \mathbb{R}_{+}^{\N} \to \mathbb{R}$ with
\begin{equation}\label{eq:Breman_positive}
    \F(\x) 
    = \left. \begin{cases}
    \frac{1}{2} \langle \x \odot \log(\x) - \x, \1 \rangle & \text{if }\L = 2\\
    \frac{\L}{2(2-\L)} \langle \x^{\odot \frac{2}{\L}}, \1 \rangle & \text{if }\L > 2
    \end{cases} \right\}
    = \begin{cases}
    \frac{1}{2}\sum_{n=1}^{\N} \xe_n\log(\xe_n) - \xe_n  & \text{if }\L = 2\\
    \frac{\L}{2(2-\L)}\sum_{n=1}^{\N} \xe_n^{\frac{2}{\L}} & \text{if }\L > 2,
    \end{cases}
\end{equation}
with the understanding that $z\log(z) = 0$ for $z=0$.
Note that $\F$ is strictly convex because its Hessian $\mathbf H_\F(\x)$ is diagonal with positive diagonal entries $\frac{1}{\L}\xe_n^{-2+\frac{2}{\L}}$, for $\x$ in the interior of $\R_+^N$, i.e., if $x_n >0$ for all $n$. Thus the Bregman divergence $D_{\F}$ is well-defined. It has the following property.

\begin{lemma} \label{lem:Boundedness}
   Let $\F$ be the function defined in \eqref{eq:Breman_positive} and $\xprod(\t) \colon \R_{+} \to \R_{+}^\N$ be a continuous function with $\xprod(0) > \0$. Let $\z \ge \0$ be fixed. If $D_\F(\z,\xprod(\t))$ is bounded, then $\| \xprod(\t) \|_2$ is bounded.
\end{lemma}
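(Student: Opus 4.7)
The plan is to compute $D_\F(\z,\xprod)$ explicitly in the two regimes $\L=2$ and $\L>2$, and then show that in each case the Bregman divergence dominates a quantity that is coercive in $\xprod$ on $\R_+^{\N}$. First I would compute the gradient of $\F$: for $\L=2$ one has $\nabla\F(\x)_n=\tfrac{1}{2}\log \xe_n$, while for $\L>2$ one has $\nabla\F(\x)_n=-\tfrac{1}{\L-2}\xe_n^{2/\L-1}$. Plugging these into Definition~\ref{def:Bregman_Divergence} and simplifying yields
\begin{align*}
    D_\F(\z,\xprod) &= \tfrac{1}{2}\bigl[\,\|\xprod\|_1 - \|\z\|_1 + \langle \z,\log(\z)-\log(\xprod)\rangle\,\bigr] \quad (\L=2),\\
    D_\F(\z,\xprod) &= \tfrac{1}{2}\sum_n \xprode_n^{2/\L} + \tfrac{1}{\L-2}\sum_n \xprode_n^{2/\L-1}\, z_n - \tfrac{\L}{2(\L-2)}\sum_n z_n^{2/\L} \quad (\L>2).
\end{align*}

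For $\L>2$, both terms involving $\xprod$ on the right-hand side are non-negative because $\xprod,\z\geq\0$, while the third summand depends only on $\z$. Hence any uniform upper bound on $D_\F(\z,\xprod(\t))$ immediately yields an upper bound on $\sum_n \xprode_n(\t)^{2/\L}$, and because $2/\L>0$ this forces every entry $\xprode_n(\t)$ to stay bounded, hence $\|\xprod(\t)\|_2$ is bounded.

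For $\L=2$ the natural coercive quantity is $\|\xprod\|_1$, but the Bregman formula also contains the potentially large negative term $-\sum_n z_n\log\xprode_n$, which a priori could cancel $\|\xprod\|_1$. To neutralise it I would use the elementary inequality $\log t \le c t - \log c - 1$, valid for all $c,t>0$ (obtained by maximising $\log t-ct$). Applying it index-wise with $c=1/(2z_n)$ on $\{n:z_n>0\}$, and using the convention $0\log 0=0$ on the remaining indices, gives $\sum_n z_n\log\xprode_n \le \tfrac{1}{2}\|\xprod\|_1 + C(\z)$ for a constant $C(\z)$ depending only on $\z$. Substituting into the identity above yields $\tfrac{1}{2}\|\xprod\|_1 \le 2 D_\F(\z,\xprod) + C'(\z)$, so once more boundedness of $D_\F$ forces boundedness of $\|\xprod\|_1$, and hence of $\|\xprod\|_2$ since $\xprod\ge\0$.

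The only real obstacle is the $\L=2$ case: the potential $\F$ is only mildly superlinear, and the $\|\xprod\|_1$ term visible in $D_\F$ is partially neutralised by $-\sum_n z_n\log\xprode_n$, so one cannot simply read coercivity off a single summand as in the $\L>2$ case. The $\log$-inequality above is the precise tool which guarantees that at most one half of $\|\xprod\|_1$ can be absorbed by this negative contribution, leaving the other half to bound $\|\xprod\|_1$ by $D_\F$ up to a $\z$-dependent constant. In contrast, the $\L>2$ case needs no such trick because $\tfrac{1}{2}\|\xprod\|_{2/\L}^{2/\L}$ sits inside $D_\F$ as a non-negative coercive summand and closes the argument directly.
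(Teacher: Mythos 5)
Your proposal is correct and follows essentially the same route as the paper: both compute $D_\F(\z,\xprod)$ explicitly from \eqref{eq:Breman_positive} and exploit coercivity of the per-coordinate summands ($x\mapsto x - z\log x$ for $\L=2$, $x\mapsto (\L-2)x^{2/\L}+2zx^{2/\L-1}$ for $\L>2$). The only difference is presentational — the paper argues by contraposition via a diverging subsequence, while you give a direct quantitative bound, with your inequality $\log t \le ct - \log c - 1$ serving as the explicit form of the paper's observation that $x - z\log x$ is bounded below and diverges.
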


\begin{proof}
    We will prove the statement by contraposition. Let $\| \xprod(\t) \|_2$ be unbounded. Then there exists a sequence $0 < \t_1 \le \t_2 \le \dots$ such that $\| \xprod(\t_k) \|_2 \to \infty$. Hence there exists some $n \in [\N]$ and a subsequence $0 < \t_{n_1} \le \t_{n_2} \le \dots$ such that $\xprode_n(\t_{n_k}) \to \infty$.
    Note that
    \begin{align*}
        D_\F(\z,\xprod(\t))
        &= \begin{cases}
        \frac{1}{2} \big( \langle \xprod(\t) - \z \odot\log(\xprod(\t)), \1 \rangle + \langle \z\odot\log(\z) - \z, \1 \rangle \big) & \mbox{if } \L = 2,\\
        \frac{1}{2(\L-2)} \big( \langle (\L-2) \xprod(\t)^{\odot \frac{2}{\L}} + 2\z \odot \xprod(\t)^{\odot (\frac{2}{\L} - 1)}, \1 \rangle - \L \langle \z^{\odot \frac{2}{\L}}, \1 \rangle \big) & \mbox{if } \L > 2.
        \end{cases}
    \end{align*}
    For $z \geq 0$, both the function $x \mapsto x - z \log(x)$
     (for $\L=2$) and the function $x \mapsto (\L-2) x^{\frac{2}{\L}} + 2z x^{ \frac{2}{\L} - 1}$ (for $\L > 2$) are bounded from below and diverge to infinity as $x$ goes to infinity. Thus $D_F(\z,\xprod(\t_{n_k})) \to \infty$ and consequently $D_F(\z,\xprod(\t))$ is unbounded.
\end{proof}

Furthermore, we will use the following observation.

\begin{lemma}\label{lemma:Bregman_nonincreasing}
   Suppose $\x$ follows the dynamics \eqref{eq:dynamic_positive} in Theorem \ref{theorem:L1_equivalence_positive}. Let $\xprod = \x^{\odot\L}$. Then for any $\z\in S_+$,
   \begin{equation}
       \partial_\t D_{\F}(\z,\xprod(\t))
       =-2\L\cdot\mathcal{L}(\xprod(\t)^{\odot\frac{1}{\L}}).\label{eq:DFderivative}
   \end{equation}
\end{lemma}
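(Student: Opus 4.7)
The plan is a direct computation: I would expand the Bregman divergence, differentiate in $t$, and use the gradient-flow identity together with the cancellation between the Hessian of $F$ and the chain-rule factors from $\xprod = \x^{\odot L}$.

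First, starting from $D_\F(\z,\xprod) = \F(\z) - \F(\xprod) - \langle \nabla \F(\xprod), \z - \xprod\rangle$, I would differentiate only the $\xprod$-dependent terms. After a short calculation the two $\nabla \F(\xprod) \cdot \xprod'$ contributions cancel, leaving the clean identity
\[
\partial_\t D_\F(\z,\xprod(\t)) \;=\; -\bigl\langle \mathbf{H}_\F(\xprod(\t))\,\xprod'(\t),\; \z - \xprod(\t)\bigr\rangle.
\]
This is the standard derivative-of-Bregman formula and requires no cases yet.

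Second, I would plug in the structure of $\F$ and the flow. The Hessian $\mathbf H_\F(\xprod)$ is diagonal with entries $\tfrac{1}{\L}\,\xprode_n^{\,\frac{2}{\L}-2}$ for both cases in \eqref{eq:Breman_positive} (the two cases are designed exactly so that this formula is uniform). The chain rule gives $\xprod'(\t) = \L\,\x^{\odot(\L-1)}\odot \x'(\t)$, and the gradient flow \eqref{eq:dynamicsBregman_positive} yields $\x'(\t) = -\L\,[\A^\T(\A\xprod(\t)-\y)]\odot \x^{\odot(\L-1)}(\t)$. Combining these and using $\x^{\odot(2\L-2)} = \xprod^{\odot 2 - \frac{2}{\L}}$ gives
\[
\xprod'(\t) = -\L^2\, \xprod(\t)^{\odot 2 - \frac{2}{\L}} \odot \A^\T\bigl(\A\xprod(\t)-\y\bigr).
\]
Multiplying entry-wise by the diagonal Hessian then produces the key cancellation
\[
\mathbf H_\F(\xprod(\t))\,\xprod'(\t) \;=\; -\L\,\A^\T\bigl(\A\xprod(\t)-\y\bigr),
\]
which holds identically for $\L=2$ and $\L>2$.

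Third, I would insert this into the Bregman derivative and use $\z\in S_+$, which means $\A\z = \y$. Then
\[
\partial_\t D_\F(\z,\xprod(\t)) = \L\bigl\langle \A^\T(\A\xprod(\t)-\y),\; \z-\xprod(\t)\bigr\rangle = \L\bigl\langle \A\xprod(\t)-\y,\; \A\z-\A\xprod(\t)\bigr\rangle,
\]
and since $\A\z - \A\xprod(\t) = -(\A\xprod(\t)-\y)$ this collapses to $-\L\|\A\xprod(\t)-\y\|_2^2 = -2\L\,\mathcal{L}(\x(\t)) = -2\L\,\mathcal{L}(\xprod(\t)^{\odot 1/\L})$, which is \eqref{eq:DFderivative}.

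The only mild obstacle is the bookkeeping of exponents (verifying the Hessian formula uniformly for both definitions of $\F$ in \eqref{eq:Breman_positive} and checking $(2\L-2)/\L = 2 - 2/\L$), but no analytic difficulty arises: the entire result is forced by the Bregman-chain-rule identity together with the fact that the Hessian of $\F$ is the inverse of the diagonal preconditioner implicitly appearing in the gradient flow, which is exactly the design principle behind the choice of $\F$.
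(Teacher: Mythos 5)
Your proposal is correct and follows essentially the same route as the paper: cancel the $\langle\nabla\F(\xprod),\xprod'\rangle$ terms to reduce to $-\langle \mathbf H_\F(\xprod)\,\xprod',\,\z-\xprod\rangle$, verify that the diagonal Hessian $\frac{1}{\L}\xprod^{\odot(\frac{2}{\L}-2)}$ exactly cancels the factor $\L^2\xprod^{\odot(2-\frac{2}{\L})}$ coming from the chain rule and the flow, and then use $\A\z=\y$ to collapse the inner product to $-\L\|\A\xprod-\y\|_2^2$. The exponent bookkeeping checks out, so no gaps remain.
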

\begin{proof}
    Note that due to continuity, for all $\t\geq 0$, $\x(\t)$ remains non-negative (if any entry $x(\t)_i$ vanishes at $\t_* > 0$, then $x(\t)_i = 0$ for all $\t \ge \t_*$ by the shape of \eqref{eq:dynamicsBregman_positive}). Suppose $\z\in S_+$. Then, we have
\begin{align*}
    \partial_\t D_{\F}(\z,\xprod(\t))
    &= \partial_\t \left[ \F(\z) - \F(\xprod(\t)) - \langle \nabla \F(\xprod(\t)), \z-\xprod(\t) \rangle \right]\\
    &= 0 - \langle \nabla \F(\xprod(\t)), \xprod'(\t) \rangle 
    - \langle \partial_\t \nabla \F(\xprod(\t)), \z-\xprod(\t) \rangle
    + \langle \nabla \F(\xprod(\t)), \xprod'(\t) \rangle\\
    &= - \langle \partial_\t \nabla \F(\xprod(\t)), \z-\xprod(\t) \rangle.
\end{align*}
By the chain rule and the diagonal shape of $\mathbf H_\F$, we know that
\begin{align*}
    \partial_t \nabla \F(\xprod(\t))
    &= \mathbf H_\F (\xprod(\t)) \cdot \xprod'(\t)
    = \frac{1}{L} \xprod(\t)^{\odot(-2+\frac{2}{L})} \odot \xprod'(\t)
    = \frac{1}{L} \x(\t)^{\odot(-2L+2)} \odot \big( L\x(\t)^{\odot(L-1)} \odot \x'(\t) \big) \\
    &= \x(\t)^{\odot(-2L+2)} \odot  \x(\t)^{\odot(L-1)} \odot \big( - \L\left[\A^{\T}( \A\x^{\odot \L}(\t) -\y)\right] \odot \x^{\odot \L-1}(\t) \big) \\
    &= -\L\left[\A^{\T}( \A\xprod(\t) -\y)\right].
\end{align*}
Therefore,
\begin{align*}
    \partial_\t D_{\F}(\z,\xprod(\t))
    &= \L\langle \A^{\T}( \A\xprod(\t) -\y), \z-\xprod(\t) \rangle\\
    &= -\L \langle \A\xprod(\t) -\y, \A\xprod(\t) - \A\z \rangle\\
    &= - \L\|\A\xprod(\t) -\y\|_2^2
     = -2\L\cdot\mathcal{L}(\xprod(\t)^{\odot\frac{1}{\L}}).
\end{align*}
This completes the proof.
\end{proof}

\begin{proof}[Proof of Theorem \ref{theorem:Bregman_positive}]
Let us begin with a brief outline of the proof. We will first use Lemma \ref{lemma:Bregman_nonincreasing} to show that the loss function converges to zero. Then we employ Lemma \ref{lem:Boundedness} to deduce the convergence of $\xprod(\t)$. Finally, we conclude with the unique characterization of the limit $\xprodinfty$ induced by the Bregman divergence. 

We first show that $\lim_{\t \to \infty} \mathcal{L}(\xprod(\t)^{\odot\frac{1}{\L}}) = 0$. Since $D_\F(\z,\xprod(\t)) \geq 0$ and $\partial_\t D_{\F}(\z,\xprod(\t))\leq 0$ by Lemma \ref{lemma:Bregman_nonincreasing}, $D_\F(\z,\xprod(\t))$ must converge for any fixed $\z\in S_+$. Since $\mathcal{L}(\xprod(\t)^{\odot\frac{1}{\L}})$ is non-negative and non-increasing in $\t$, for any $\t>0$ we have $\min_{\tau\leq \t}\mathcal{L}(\xprod(\tau)^{\odot\frac{1}{\L}}) = \mathcal{L}(\xprod(\t)^{\odot\frac{1}{\L}})$. Thus by Lemma \ref{lemma:Bregman_nonincreasing}
\begin{align*}
    D_\F(\z,\xprod(\t)) - D_\F(\z,\xprod(0)) 
    = \int_0^\t \partial_\t D_{\F}(\z,\xprod(\tau)) d\tau
    = - 2\L \int_0^\t \mathcal{L}(\xprod(\tau)^{\odot\frac{1}{\L}}) d\tau
    \leq - 2 \L \t \mathcal{L}(\xprod(\t)^{\odot\frac{1}{\L}}).
\end{align*}
Rearranging terms we have
\begin{align*}
    \mathcal{L}(\xprod(\t)^{\odot\frac{1}{\L}}) \leq \frac{D_\F(\z,\xprod(0)) -D_\F(\z,\xprod(\t))}{2\L\t} \le \frac{D_\F(\z,\xprod(0))}{2\L\t} \longrightarrow 0
\end{align*}
as $\t\to\infty$. We conclude that $\lim_{\t \to \infty} \mathcal{L}(\xprod(\t)^{\odot\frac{1}{\L}}) = 0$ and thus $\lim_{\t \to \infty} \A\xprod(\t) = \y$. 

We now deduce that $\xprod(\t)$ converges to some $\bar{z}\in S_{+}$. By Lemma \ref{lemma:Bregman_nonincreasing}, $D_\F(\z,\xprod(\t))$ is bounded. By Lemma \ref{lem:Boundedness}, $\|\xprod(\t)\|_2$ is bounded. According to the Bolzano-Weierstrass Theorem, there exists a sequence $\{\t_k\}_{k\in\mathbb{N}}$ such that $\xprod(\t_k)$ converges to some $\bar{\z}\in S_{+}$ since the loss converges to zero. Consequently, $D_\F(\bar{\z},\xprod(\t_k))$ converges to zero. Because $D_\F(\bar{\z},\xprod(\t))$ is non-increasing, this implies that $D_\F(\bar{\z},\xprod(\t))$ converges to zero as well. By strict convexity of $D_\F$ in its first component and the fact that $D_\F(\w,\w) = 0$, $\xprod$ must converge to $\bar{z}\in S_{+}$.

Because $\partial_\t D_{\F}(\z,\xprod(\t))$ is identical for all $\z \in S_+$, the difference
\begin{align}
    \Delta_{\z} = D_{\F}(\z,\xprod(0)) - D_{\F}(\z,\xprodinfty)
\end{align}
is constant in $\z \in S_+$. By non-negativity of $D_{\F}$,
\begin{align}
    D_{\F}(\z,\xprod(0))
    \geq \Delta_{\z}
    = \Delta_{\xprodinfty}
    = D_{\F}(\xprodinfty,\xprod(0)).
\end{align}
Thus
\begin{align*}
    \xprodinfty \in
    &\argmin_{\z\in S_+}D_{\F}(\z,\xprod(0))
    = \argmin_{\z\in S_+} \F(\z) - \F(\xprod(0)) - \langle \nabla \F(\xprod(0)), \z-\xprod(0) \rangle\\
    &= \argmin_{\z\in S_+} \F(\z) - \langle \nabla \F(\xprod(0)), \z\rangle\\
    &= \argmin_{\z\in S_+} \begin{cases}
    \sum_{n=1}^{\N} \ze_n\log(\ze_n) - \ze_n - \log(\xprode_n(0))\ze_n  & \text{if }\L = 2,\\
    \sum_{n=1}^{\N} - \ze_n^{\frac{2}{\L}} + \frac{2}{\L}\xprode_n(0)^{\frac{2}{\L}-1}\ze_n & \text{if }\L > 2,
    \end{cases}\\
    &= \argmin_{\z\in S_+} \begin{cases}
    \langle \z, \log(\z) -\bo - \log(\xprod(0)) \rangle  & \text{if }\L = 2,\\
    \langle \z, \xprod(0)^{\odot \frac{2}{\L}-1} \rangle-\frac{\L}{2}\|\z\|_{2/\L}^{2/\L}  & \text{if }\L > 2
    \end{cases}\\
    &= \argmin_{\z\in S_+} g_{\xprod(0)}(\z).
\end{align*}
This completes the proof.
\end{proof}
According to Theorem \ref{theorem:Bregman_positive},
\begin{equation}\label{eq:g_compare_1}
    g_{\xprod(0)}(\xprodinfty) \leq g_{\xprod(0)}(\z)
\end{equation}
for all $\z\in S_+ \subset \mathbb{R}_+^\N$. The proof of Theorem \ref{theorem:L1_equivalence_positive} now uses that $g_{\xprod(0)}(\z)\approx\|\z\|_{\w,1}$ when $\xprod(0)$ is small such that \eqref{eq:g_compare_1} implies that $\|\x\|_{\w,1}\leq \|\z\|_{\w,1}+\epsilon$, for some small $\epsilon$. Before turning to the proof of Theorem \ref{theorem:L1_equivalence_positive}, we thus investigate the relation between $g_{\x}(\z)$ and $\|\z\|_{\w,1}$ in the following lemma.
\begin{lemma}\label{lemma:KKT_g_z}
   Consider the function $\g_\x:\mathbb{R}_+^\N\to \mathbb{R}$, which was defined in \eqref{eq:g} and appeared in Theorem \ref{theorem:Bregman_positive}, given by
   \begin{equation*}
        g_{\x}(\z) := \begin{cases}
        \langle \z, \log(\z) -\bo  - \log(\x) \rangle  & \text{if }\L = 2,\\
        \langle \z, \x^{\odot \frac{2}{\L}-1} \rangle-\frac{\L}{2}\|\z\|_{2/\L}^{2/\L}  & \text{if }\L > 2.
        \end{cases}
    \end{equation*}
    Suppose $\x>0$ and $\z\geq0$. Denote the {\bf unsigned weighted} $\ell_1$-norm of $\z$ with weight $\w$ by
    \begin{equation*}
        \|\z\|_{\w,1} = \|\w\odot\z\|_1.
    \end{equation*}
    Let $\w = \x^{\odot\frac{2}{\L}-1}$, $\beta_{1} = \|\x\|_{\w,1}$, and $\beta_{\min} = \min_{\n\in[\N]}\we_\n\xe_\n$. Then \begin{equation}
        \tilde{\g}(\|\z\|_{\w,1},\beta_{1}) \leq \g_{\x}(\z)\leq \tilde{\g}(\|\z\|_{\w,1},\beta_{\min}).
    \end{equation}
    where $\tilde \g:\mathbb{R}_+\times\mathbb{R}_+\to\mathbb{R}$ is defined as
    \begin{equation}
        \tilde{\g}(\a,\b)=\begin{cases}
        \a(\log\a-1-\log\b)  & \text{if }\L = 2,\\
        \a - \frac{\L}{2}\a^{\frac{2}{\L}}\b^{1-\frac{2}{\L}} & \text{if }\L > 2.
        \end{cases}
    \end{equation}
\end{lemma}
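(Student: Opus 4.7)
The plan is to rewrite $g_{\x}(\z)$ in a form that isolates $\|\z\|_{\w,1}$, so that in each of the two cases the claimed two-sided bound on $g_\x(\z)$ reduces to a single two-sided bound on an auxiliary sum. For $\L=2$ the weight is $\w=\bo$, so $\|\z\|_{\w,1} = \|\z\|_1$, $\beta_1 = \|\x\|_1$, $\beta_{\min} = \min_n \xe_n$, and $g_\x(\z) = \sum_n \ze_n\log(\ze_n/\xe_n) - \|\z\|_1$; the goal becomes to sandwich $\sum_n \ze_n\log(\ze_n/\xe_n)$ between $\|\z\|_1 \log(\|\z\|_1/\beta_1)$ and $\|\z\|_1 \log(\|\z\|_1/\beta_{\min})$. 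For $\L>2$ one has $\we_n \xe_n = \xe_n^{2/\L}$, so $\beta_1 = \sum_n \xe_n^{2/\L}$, $\beta_{\min} = \min_n \xe_n^{2/\L}$, and $g_\x(\z) = \|\z\|_{\w,1} - \tfrac{\L}{2}\sum_n \ze_n^{2/\L}$; the problem then reduces to proving
\[
\|\z\|_{\w,1}^{2/\L}\,\beta_{\min}^{1-2/\L} \;\leq\; \sum_n \ze_n^{2/\L} \;\leq\; \|\z\|_{\w,1}^{2/\L}\,\beta_1^{1-2/\L}.
\]

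For the $\L=2$ case, the lower bound on $g_\x(\z)$ is immediate from the classical log-sum inequality applied to the non-negative sequences $(\ze_n)$ and $(\xe_n)$. For the upper bound I would split the sum, use monotonicity of $\log$ together with $\xe_n \geq \beta_{\min}$ to obtain $-\sum_n \ze_n\log\xe_n \leq -\|\z\|_1 \log\beta_{\min}$, and observe that $\ze_n \leq \|\z\|_1$ entrywise yields $\ze_n\log\ze_n \leq \ze_n\log\|\z\|_1$ (this is valid for both signs of $\log\|\z\|_1$, using that we multiply by $\ze_n \geq 0$, and with the convention $0\log 0 = 0$), so that $\sum_n \ze_n\log\ze_n \leq \|\z\|_1\log\|\z\|_1$.

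For the $\L>2$ case, the upper bound on $\sum_n \ze_n^{2/\L}$ is a H\"older inequality: writing $\ze_n^{2/\L} = (\we_n\ze_n)^{2/\L}\cdot\we_n^{-2/\L}$ and applying H\"older with conjugate pair $(p,q) = (\L/2,\,\L/(\L-2))$, the first factor collapses to $(\sum_n \we_n\ze_n)^{2/\L} = \|\z\|_{\w,1}^{2/\L}$, while the second contributes $(\sum_n \we_n^{-2/(\L-2)})^{1-2/\L}$. Using the identity $\we_n = \xe_n^{2/\L-1}$, a short calculation gives $\we_n^{-2/(\L-2)} = \xe_n^{2/\L}$, so the second factor is exactly $\beta_1^{1-2/\L}$. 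For the lower bound I would rewrite $\we_n^{-2/\L} = (\we_n\xe_n)^{1-2/\L}$ (same identity) and bound it pointwise below by $\beta_{\min}^{1-2/\L}$, using $1-2/\L > 0$; this yields $\ze_n^{2/\L} \geq \beta_{\min}^{1-2/\L}(\we_n\ze_n)^{2/\L}$, and then invoking the concavity inequality $\sum_n a_n^{q}\geq (\sum_n a_n)^{q}$ for $a_n \geq 0$ and $q = 2/\L \in (0,1)$ with $a_n = \we_n\ze_n$ finishes the job.

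The manipulations are essentially bookkeeping with exponents; the only step that requires a moment of thought is the identification of the H\"older pair $(\L/2, \L/(\L-2))$ together with the identity $\we_n^{-2/(\L-2)} = \xe_n^{2/\L}$ in the $\L>2$ case, which makes the H\"older sum collapse exactly to $\beta_1$. No convexity or convergence question arises, so I do not anticipate a real obstacle beyond this algebraic observation.
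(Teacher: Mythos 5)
Your proposal is correct, and it takes a genuinely different route from the paper. The paper restricts $g_\x$ to the level set $\{\boldsymbol\xi\geq 0:\|\boldsymbol\xi\|_{\w,1}=\|\z\|_{\w,1}\}$, obtains the upper bound by noting that the relevant part of $g_\x$ is convex so its supremum over this simplex is attained at an extreme point $\c\,\e_\n$, and obtains the lower bound by a Lagrangian/KKT analysis that must be carried out separately on every face (every support set $I\subset[\N]$) to account for minima on the boundary of $\R_+^\N$. You instead prove both inequalities directly: for $\L=2$ the lower bound is exactly the log-sum inequality applied to $(\ze_\n)$ and $(\xe_\n)$, and the upper bound follows from the entrywise estimates $\xe_\n\geq\beta_{\min}$ and $\ze_\n\leq\|\z\|_1$; for $\L>2$ the upper bound is H\"older with exponents $(\L/2,\L/(\L-2))$ combined with the identity $\we_\n^{-2/(\L-2)}=\xe_\n^{2/\L}$ (which makes the conjugate factor collapse to $\beta_1^{1-2/\L}$), and the lower bound uses $\we_\n^{-2/\L}=(\we_\n\xe_\n)^{1-2/\L}\geq\beta_{\min}^{1-2/\L}$ together with superadditivity of $t\mapsto t^{2/\L}$. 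I checked the exponent bookkeeping and the degenerate case $\z=\0$; everything is sound. What your approach buys is a shorter argument that entirely avoids the boundary-case enumeration and the verification that the Lagrangian stationary point is indeed the minimum; what the paper's approach buys is transparency about sharpness, since it exhibits the explicit maximizers ($\c\,\e_\n$) and minimizers ($\c\,\x^I$) at which the two bounds are attained.
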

\begin{proof}
    Note that the theorem holds trivially for $\|\z\|_{\w,1} =0$. Hence we only consider the case $\|\z\|_{\w,1}>0$. Since $\z\in\{\boldsymbol\xi\geq 0:\|\boldsymbol\xi\|_{\w,1} = \|\z\|_{\w,1}\}$,
    \begin{align}
        g_{\x}(\z) &\leq \sup_{\boldsymbol\xi\geq 0,\,\|\boldsymbol\xi\|_{\w,1} = \|\z\|_{\w,1}} g_{\x}(\boldsymbol\xi)\label{eq:g_sup},\\
        g_{\x}(\z) &\geq \inf_{\boldsymbol\xi\geq 0,\,\|\boldsymbol\xi\|_{\w,1} = \|\z\|_{\w,1}} g_{\x}(\boldsymbol\xi)\label{eq:g_inf}.
    \end{align}
    Note that on such domain, $g_{\x}(\boldsymbol\xi)$ can be expressed as
    \begin{align*}
        g_{\x}(\boldsymbol\xi)
        = \begin{cases}
        \langle \boldsymbol\xi, \log(\boldsymbol\xi) - \log(\x) \rangle - \|\z\|_{\w,1}  & \text{if }\L = 2,\\
        \|\z\|_{\w,1} -\frac{\L}{2}\|\boldsymbol\xi\|_{2/\L}^{2/\L}  & \text{if }\L > 2.
        \end{cases}
    \end{align*}
    Hence while taking the supremum/infimum, we only need to concentrates on the term $ \langle \boldsymbol\xi, \log(\boldsymbol\xi) - \log(\x) \rangle$ for $\L=2$, and the term $-\frac{\L}{2}\|\boldsymbol\xi\|_{2/\L}^{2/\L}$ for $\L>2$. Since those terms are convex (in $\boldsymbol\xi$) and the domain is a convex set, the supremum is attained at an extreme point of the form $\boldsymbol\xi=\c{\bf e}_\n$ for some $\c>0$ where ${\bf e}_\n$ is a coordinate vector. To determine $\c$, we use the relation $\|\boldsymbol\xi\|_{\w,1} = \|\z\|_{\w,1}$ to deduce that
    \begin{equation*}
        \c = \frac{\|\z\|_{\w,1}}{\we_\n} = \|\z\|_{\w,1}\xe_\n^{1-\frac{2}{\L}}.
    \end{equation*}
    Hence
    \begin{align*}
        \sup_{\boldsymbol\xi\geq 0,\,\|\boldsymbol\xi\|_{\w,1} = \|\z\|_{\w,1}} g_{\x}(\xi)
        = \sup_{\n\in[\N]}g_{\x}(\c{\bf e}_\n)
        &= \sup_{\n\in[\N]}\begin{cases}
        \c\log(\c) - \c\log(\xe_\n) - \|\z\|_{\w,1}  & \text{if }\L = 2,\\
        \|\z\|_{\w,1} -\frac{\L}{2}\c^{\frac{2}{\L}}  & \text{if }\L > 2.
        \end{cases}\\
        &= \sup_{\n\in[\N]} \begin{cases}
        \|\z\|_{\w,1} (\log\|\z\|_{\w,1} -1 - \log(\xe_\n))  & \text{if }\L = 2,\\
        \|\z\|_{\w,1} -\frac{\L}{2}\|\z\|_{\w,1}^{\frac{2}{\L}}\xe_\n^{(1-\frac{2}{\L})\frac{2}{\L}}  & \text{if }\L > 2,
        \end{cases}\\
        &= \begin{cases}
        \|\z\|_{\w,1} (\log\|\z\|_{\w,1} -1 - \log(\min_{\n\in[\N]}\xe_\n))  & \text{if }\L = 2,\\
        \|\z\|_{\w,1} -\frac{\L}{2}\|\z\|_{\w,1}^{\frac{2}{\L}}\min_{\n\in[\N]}\xe_\n^{(1-\frac{2}{\L})\frac{2}{\L}}  & \text{if }\L > 2,
        \end{cases}\\
        &= \tilde{\g}(\|\z\|_{\w,1},\beta_{\min}).
    \end{align*}
    Let us now consider the minimization problem \eqref{eq:g_inf}. 
    The minimum is either attained in the interior of $\R_+^N$ or on the boundary, where at least one coordinate of $\boldsymbol\xi$ is zero. In order to treat both cases simultaneously we fix a set $I \subset [\N]$ of non-zero coordinates of $\boldsymbol\xi$ (possibly $I = [\N]$), set $\xi_n = 0$ for $n \in I$ 
    and restrict the minimization to the positive coordinates, i.e., to $\boldsymbol\xi^I \in \R^I$ with $\xi_n > 0$ for all $n \in I$. With $\x^I \in \R^I$ defined via $\xe^I_\n = \xe_\n$ for $n \in I$, we set
    \[
    g_{I,\x^I}(\boldsymbol\xi^I):= \begin{cases}
        \langle \boldsymbol\xi^I, \log(\boldsymbol\xi^I) - \log(\x^I) \rangle  & \text{if }\L = 2,\\
        -\frac{\L}{2}\|\boldsymbol\xi^I\|_{2/\L}^{2/\L}  & \text{if }\L > 2.
        \end{cases} 
    \]
    If $\xi_n = 0$ for $n \notin I$ 
    and $\xi^I_n = \xi_n$ for $n \in I^c$, then we have $g_{I,\x^I}(\boldsymbol\xi^I) = g_\x(\boldsymbol\xi)$, so that we consider now the minimization problem
    \begin{equation}\label{min-prob-I}
        \min_{\boldsymbol\xi^I \in \R^I, \xi^I > 0} g_{I,\x^I}(\boldsymbol\xi^I) \quad \mbox{ subject to }
        \begin{cases}
        \langle \boldsymbol\xi^I, \bo^I\rangle = \|\z\|_{\w,1} &\text{if }\L=2,\\
        \langle \boldsymbol\xi^I, (\x^I)^{\odot \frac{2}{\L}-1}\rangle = \|\z\|_{\w,1}&\text{if }\L>2.
        \end{cases}
    \end{equation}
    In order to solve this optimization problem, we consider the Lagrangian
    \begin{equation}
        h_I(\boldsymbol\xi^I,\lambda) = g_{I,\x^I}(\boldsymbol\xi^I) + \lambda\cdot
        \begin{cases}
        \langle \boldsymbol\xi^I, \bo^I\rangle - \|\z\|_{\w,1} &\text{if }\L=2,\\
        \langle \boldsymbol\xi^I, (\x^I)^{\odot \frac{2}{\L}-1}\rangle - \|\z\|_{\w,1}&\text{if }\L>2.
        \end{cases}
    \end{equation}
    Its gradient is given, for $\boldsymbol\xi^I > 0$, by
    \begin{align*}
        \nabla_{\boldsymbol\xi^I} h_I(\boldsymbol\xi^I,\lambda) &= 
        \begin{cases}
        (\lambda+1)\cdot\bo^I +\log(\boldsymbol\xi^I) -\log(\x^I)  & \text{if }\L = 2,\\
        \lambda(\x^I)^{\odot \frac{2}{\L}-1} -(\boldsymbol\xi^I)^{\odot \frac{2}{\L}-1} & \text{if }\L > 2,\\
        \end{cases}\\
        \nabla_\lambda h(\boldsymbol\xi^I,\lambda) &=
        \begin{cases}
        \langle \boldsymbol\xi^I, \bo^I\rangle - \|\z\|_{\w,1} &\text{if }\L=2,\\
        \langle \boldsymbol\xi^I, (\x^I)^{\odot \frac{2}{\L}-1}\rangle - \|\z\|_{\w,1} &\text{if }\L>2.
        \end{cases}
    \end{align*}
    At a minimum $\boldsymbol\xi^I_*$ for the problem \eqref{min-prob-I} we necessarily have $\nabla h_I(\boldsymbol\xi^I_*, \lambda_*) = 0$ for some $\lambda_* \in \R$. This leads to  
    \begin{align*}
        \begin{cases}
        (\xi^I_*)_\n = e^{-\lambda_*-1 }\xe^I_\n  &\text{if }\L = 2,\\
        (\xi^I_*)_n^{\frac{2}{\L}-1} = \lambda_* (\xe_\n^I)^{\frac{2}{\L}-1} &\text{if }\L > 2,
        \end{cases}
    \end{align*}
    for all $\n\in I$. By assumption all components of $\x$ are positive so that $(\xi^I_*)_\n > 0$ for all $\n \in I$ can be satisfied. Moreover, for a $(\xi^I_*)_\n > 0$ the relation above for $L>2$ implies that $\lambda_* > 0$.
    Taking into account the constraint that $\|\boldsymbol\xi^I\|_{\w,1}= \|\z\|_{\w,1}$, we obtain $e^{-\lambda_*-1} = \frac{\|\z\|_{\w,1}}{\|\x^I\|_{\w,1}}$ for $L=2$. Similarly, for $\L > 2$ we have $\lambda_* = \frac{\|\z\|_{\w,1}}{\|\x^I\|_{\w,1}}$.
    Combining the results we have
    \begin{equation*}
        (\xi^I_*)_\n=\xe_\n \frac{\|\z\|_{\w,1}}{\|\x^I\|_{\w,1}} \quad \mbox{ for all } \n \in I.
    \end{equation*}
    For notation simplicity, denote $\c = \frac{\|\z\|_{\w,1}}{\|\x^I\|_{\w,1}}$. This implies that the quantity in \eqref{eq:g_inf} can be expressed as
    \begin{align*}
        \inf_{\substack{\boldsymbol\xi\geq 0\\ \|\boldsymbol\xi\|_{\w,1} = \|\z\|_{\w,1}}} g_{\x}(\boldsymbol\xi)
        = \inf_{I\subset[\N]}g_{\x}\left(\c\x^I\right)
        &= \inf_{I\subset[\N]}\begin{cases}
        \langle \c\x^I, \log(\c\x^I)  - \log(\x^I) \rangle - \|\z\|_{\w,1}  & \text{if }\L = 2,\\
        \|\z\|_{\w,1}-\frac{\L}{2}\|\c\x^I\|_{2/\L}^{2/\L}  & \text{if }\L > 2.
        \end{cases}\\
        &= \inf_{I\subset[\N]}\begin{cases}
        \langle \c\log(\c)\|\x^I\|_1-\|\z\|_{\w,1} & \text{if }\L = 2,\\
        \|\z\|_{\w,1} - \frac{\L}{2}\c^{\frac{2}{\L}}\|\x^I\|_{\w,1}  & \text{if }\L > 2.
        \end{cases}\\
        &= \inf_{I\subset[\N]}\begin{cases}
        \|\z\|_{\w,1} (\log\|\z\|_{\w,1} -1 - \log\|\x^I\|_{\w,1})  & \text{if }\L = 2,\\
        \|\z\|_{\w,1} - \frac{\L}{2}\|\z\|_{\w,1}^\frac{2}{\L}\|\x^I\|_{\w,1}^{1-\frac{2}{\L}} & \text{if }\L > 2,
        \end{cases}\\
        &= \begin{cases}
        \|\z\|_{\w,1} (\log\|\z\|_{\w,1} -1 - \log\|\x\|_{\w,1})  & \text{if }\L = 2,\\
        \|\z\|_{\w,1} - \frac{\L}{2}\|\z\|_{\w,1}^\frac{2}{\L}\|\x\|_{\w,1}^{1-\frac{2}{\L}} & \text{if }\L > 2,
        \end{cases}\\
        &= \tilde{\g}(\|\z\|_{\w,1},\beta_{1}).
    \end{align*}
This completes the proof.
\end{proof}
\begin{proof}[Proof of Theorem \ref{theorem:L1_equivalence_positive}]
Let $\z\in S_+$, We will first discuss the case $\L=2$. By Theorem \ref{theorem:Bregman_positive}, $g_{\xprod(0)}(\xprodinfty) \leq g_{\xprod(0)}(\z)$. According to Lemma \ref{lemma:KKT_g_z}, this implies that 
\begin{equation}\label{eq:xz_compare_1}
    \tilde{\g}(\|\xprodinfty\|_{\w,1},\beta_{1})
    \leq \g_{\xprod(0)}(\xprodinfty)
    \leq \g_{\xprod(0)}(\z)
    \leq \tilde{\g}(\|\z\|_{\w,1},\beta_{\min}).
\end{equation}
It follows from \eqref{eq:xz_compare_1} that
\begin{equation*}
    \tilde{\g}(\|\xprodinfty\|_{\w,1},\beta_{1}) - \tilde{\g}(\|\z\|_{\w,1},\beta_{1})
    \leq \tilde{\g}(\|\z\|_{\w,1},\beta_{\min}) - \tilde{\g}(\|\z\|_{\w,1},\beta_{1}).
\end{equation*}
Since $\tilde{\g}$ is convex in its first argument, we have
\begin{align}
    \partial_1\tilde{\g}(\|\z\|_{\w,1},\beta_{1})\cdot (\|\xprodinfty\|_{\w,1}-\|\z\|_{\w,1})
    &\leq\tilde{\g}(\|\xprodinfty\|_{\w,1},\beta_{1}) - \tilde{\g}(\|\z\|_{\w,1},\beta_{1})\nonumber\\
    &\leq\tilde{\g}(\|\z\|_{\w,1},\beta_{\min}) - \tilde{\g}(\|\z\|_{\w,1},\beta_{1}),
    \label{eq:convex_approx}
\end{align}
where $\partial_1$ denotes the derivative with respect to the first argument.
Note that $\Q=\min_{\z\in S_+}\|\z\|_{\w,1}$ satisfies $\Q>\beta_{1}$ by assumption. Computing the derivative then yields that $\partial_1\tilde{\g}(\Q,\beta_{1})>0$. Dividing both sides of \eqref{eq:convex_approx} by $\partial_1\tilde{\g}(\Q,\beta_{1})$, we thus obtain that
\rev{
\begin{align*}
    \|\xprodinfty\|_{\w,1}-\Q
    &\leq \frac{\tilde{\g}(\Q,\beta_{\min}) - \tilde{\g}(\Q,\beta_{1})}{\partial_1\tilde{\g}(\Q,\beta_{1})}\\
    &=\begin{cases}
        \frac{\Q(\log(\Q)-1-\log(\beta_{\min})) - \Q(\log(\Q)-1-\log(\beta_{1}))}{\log(\Q) - \log(\beta_{1})}&\text{if }\L=2,\\[6pt]
        \frac{(\Q - \frac{\L}{2}\Q^{\frac{2}{\L}}\beta_{\min}^{1-\frac{2}{\L}}) - (\Q - \frac{\L}{2}\Q^{\frac{2}{\L}}\beta_{1}^{1-\frac{2}{\L}})}{1- \Q^{\frac{2}{\L}-1}\beta_{1}^{1-\frac{2}{\L}}}&\text{if }\L>2
    \end{cases}\\
    &=\Q \cdot 
    \begin{cases}
        \frac{\log(\beta_{1}/\beta_{\min})}{\log(\Q/\beta_{1})} &\text{if }\L=2,\\[6pt]
        \frac{\L(\beta_{1}^{1-\frac{2}{\L}} - \beta_{\min}^{1-\frac{2}{\L}})}{2(\Q^{1-\frac{2}{\L}}- \beta_{1}^{1-\frac{2}{\L}})}&\text{if }\L>2
    \end{cases}.
\end{align*}
}
This completes the proof.
\end{proof}
%


\section{General Case}
\label{sec:Gradient_Flow_General}
This section is dedicated to the proof of Theorem \ref{theorem:L1_equivalence}. Since the proof strategy is very similar to the one of Theorem \ref{theorem:L1_equivalence_positive}, we will not replicate all arguments, but only highlight the key ideas. 
We use the following additional notation in this section.
Let 
\begin{align} \label{def:S}
    S = \{\z\in\mathbb{R}^{\N}:\A\z = \y\}\rev{.}
\end{align}
For $\z\in \mathbb{R}^{\N}$, we denote $\IzPlus=\{n:\ze_n >0\}$ and $\IzMinus=\{n:\ze_n <0\}$ the index sets corresponding to positive and negative entries of $\z$.  We decompose $\z = \z_+ - \z_-$ with
\begin{align*}
    (\ze_+)_n
    = \begin{cases}
    \ze_n &\text{ if } \ze_n > 0, \\ 
    0 &\text{ otherwise},
    \end{cases}
    \qquad \text{ and } \qquad
    (\ze_-)_n
    = \begin{cases}
    -\ze_n &\text{ if } \ze_n < 0, \\ 
    0 &\text{ otherwise.}
    \end{cases}
\end{align*}
We furthermore define
\begin{align} \label{def:Spm}
    \Spm = \{ (\z_+,\,\z_-): \z_+,\,\z_-\geq 0,\, \A(\z_+-\z_-)=\y\}.
\end{align}
as an alternative representation of the solution set $S$.



\subsection{General Bregman divergence}
\label{subsec:Bregman_Divergence_general}


In order to prove that $\u^{\odot\L} - \v^{\odot\L}$ converges to an element in $S$, we will again work with the Bregman Divergence. The main difference is that, instead of $\F$ as defined in \eqref{eq:Breman_positive}, we use the function $\F^{\pm} \colon \mathbb{R}_{+}^{\N} \times \mathbb{R}_{+}^{\N} \to \mathbb{R}$ with
\begin{equation}
    \F^{\pm}(\u,\,\v) = \F(\u) + \F(\v).
\end{equation}
Note that $\F^{\pm}$ is strictly convex because $\F$ is strictly convex and hence the Bregman divergence $D_{\F^\pm} \colon (\mathbb{R}_{+}^{\N} \times \mathbb{R}_{+}^{\N})^2 \to \mathbb{R}$ is well-defined.

\begin{theorem}\label{theorem:Bregman_general}
Let $(\uprod(\t),\,\vprod(\t)) = (\u(\t)^{\odot\L},\,\v(\t)^{\odot\L})$ and
\begin{align}
    \u'(\t) = -\nabla_\u \mathcal{L}^\pm(\u,\,\v),
    \quad 
    \v'(\t) = -\nabla_\v \mathcal{L}^\pm(\u,\,\v)
\end{align}
with $\u(0),\v(0)\geq 0$. Then the limit $(\uprodinfty,\vprodinfty):=\lim_{\t \to \infty} (\uprod(\t),\,\vprod(\t))$ exists and
\begin{equation}\label{eq:optimal_x_general}
    (\uprodinfty,\vprodinfty)
    \in\argmin_{(\z_+,\,\z_-)\in\Spm}
    g_{\uprod(0)}(\z_+) + g_{\vprod(0)}(\z_-)
\end{equation}
where $g$ and $S_\pm$ are defined in \eqref{eq:optimal_x_positive} and \eqref{def:Spm}.
\end{theorem}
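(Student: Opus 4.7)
The plan is to follow the same three-step scheme used in the proof of Theorem~\ref{theorem:Bregman_positive}, exploiting the separable structure $\F^\pm(\u,\v) = \F(\u) + \F(\v)$. The crucial observation is that the Bregman divergence is additive across the two blocks:
\begin{equation*}
D_{\F^\pm}\bigl((\z_+,\z_-),(\uprod,\vprod)\bigr) = D_\F(\z_+,\uprod) + D_\F(\z_-,\vprod),
\end{equation*}
so that all estimates reduce to applying existing one-sided machinery in parallel.

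First I would compute $\partial_t D_{\F^\pm}((\z_+,\z_-),(\uprod(\t),\vprod(\t)))$ for $(\z_+,\z_-)\in\Spm$. By the same chain-rule cancellation as in Lemma~\ref{lemma:Bregman_nonincreasing}, each block produces a term $\partial_t\nabla\F(\uprod(\t)) = -\L\A^\T(\A(\uprod-\vprod)-\y)$ and $\partial_t\nabla\F(\vprod(\t)) = +\L\A^\T(\A(\uprod-\vprod)-\y)$ (the sign flip comes from the sign in $\v' = -\nabla_\v \mathcal{L}^\pm$). Adding the two contributions and inserting the constraint $\A(\z_+-\z_-)=\y$ collapses the cross term and yields
\begin{equation*}
\partial_t D_{\F^\pm}\bigl((\z_+,\z_-),(\uprod(\t),\vprod(\t))\bigr) = -\L\|\A(\uprod(\t)-\vprod(\t))-\y\|_2^2 = -2\L\,\mathcal{L}^\pm(\u(\t),\v(\t)),
\end{equation*}
which is non-positive and, crucially, independent of the choice of $(\z_+,\z_-)\in\Spm$.

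Second, with monotonicity of the loss along the flow (standard for gradient flow) and the integral identity obtained from the display above, the argument of Theorem~\ref{theorem:Bregman_positive} carries over verbatim to conclude $\mathcal{L}^\pm(\u(\t),\v(\t))\to 0$, hence $\A(\uprod-\vprod)\to\y$. Boundedness of each of $\|\uprod(\t)\|_2$ and $\|\vprod(\t)\|_2$ follows by applying Lemma~\ref{lem:Boundedness} to each block separately, since $D_\F(\z_+,\uprod(\t))$ and $D_\F(\z_-,\vprod(\t))$ are individually bounded (both are non-negative and their sum is non-increasing). Bolzano--Weierstrass then produces a subsequence along which $(\uprod,\vprod)$ converges to some $(\bar\u,\bar\v)$ with $\A(\bar\u-\bar\v)=\y$, i.e., $(\bar\u,\bar\v)\in\Spm$. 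Strict convexity of $D_{\F^\pm}$ in its first argument, combined with the monotonicity of $t\mapsto D_{\F^\pm}((\bar\u,\bar\v),(\uprod(\t),\vprod(\t)))$, upgrades subsequential to full convergence.

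Finally, the characterization \eqref{eq:optimal_x_general} follows by the same ``$\Delta$-argument'' as before: since $\partial_t D_{\F^\pm}((\z_+,\z_-),(\uprod(\t),\vprod(\t)))$ does not depend on $(\z_+,\z_-)\in\Spm$, the difference
\begin{equation*}
\Delta_{(\z_+,\z_-)} := D_{\F^\pm}\bigl((\z_+,\z_-),(\uprod(0),\vprod(0))\bigr) - D_{\F^\pm}\bigl((\z_+,\z_-),(\uprodinfty,\vprodinfty)\bigr)
\end{equation*}
is constant over $\Spm$. Evaluating at $(\z_+,\z_-)=(\uprodinfty,\vprodinfty)$ and using $D_{\F^\pm}\ge 0$ shows $(\uprodinfty,\vprodinfty)$ minimizes $(\z_+,\z_-)\mapsto D_{\F^\pm}((\z_+,\z_-),(\uprod(0),\vprod(0)))$ over $\Spm$; dropping the additive and linear terms that do not depend on $(\z_+,\z_-)$ recovers exactly $g_{\uprod(0)}(\z_+) + g_{\vprod(0)}(\z_-)$ as the effective objective, just as in the computation at the end of the proof of Theorem~\ref{theorem:Bregman_positive}. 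The only mild subtlety to watch is that in $\Spm$ the pair $(\z_+,\z_-)$ need not be the positive/negative decomposition of a single vector (the supports may overlap), but this never enters the Bregman calculation and only matters for interpreting the minimizer in the subsequent $\ell_1$-bound.
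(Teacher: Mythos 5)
Your proposal is correct and follows essentially the same route as the paper: additive splitting of $D_{\F^\pm}$ into the two block Bregman divergences, the sign-flipped chain-rule computation giving $\partial_t D_{\F^\pm} = -\L\|\A(\uprod-\vprod)-\y\|_2^2$ independent of $(\z_+,\z_-)\in\Spm$, and the same $\Delta$-argument for the variational characterization. Your closing remark that pairs in $\Spm$ need not have disjoint supports, and that this only matters for the later $\ell_1$ bound, is also consistent with how the paper uses the result.
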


\begin{proof}
The proof follows the same steps as the proof of Theorem \ref{theorem:Bregman_positive}, i.e., we start with computing $\partial_\t D_{\F^{\pm}}((\z_+,\,\z_-),\,(\uprod(\t),\,\vprod(\t)))$.
As before, note that due to continuity $\u(\t)$ and $\v(\t)$ remain non-negative for all $\t\geq 0$. Let $\uprod = \u^{\odot\L}$ and $\vprod = \v^{\odot\L}$. Suppose $\A\z = \y$. Decompose $\z$ into $\z_+ - \z_-$ where $\z_+ \ge \0$ and $\z_- \ge \0$ contain the positive resp.\ negative entries of $\z$ in absolute value and are zero elsewhere. We now compute the time derivative of $D_{\F^{\pm}}((\z_+,\,\z_-),(\u(\t),\,\v(\t)))$. By linearity
\begin{align*}
    D_{\F^{\pm}}((\z_+,\,\z_-),\,(\uprod(\t),\,\vprod(\t)))
    &= \F(\z_+) + \F(\z_-) - \F(\uprod(\t)) - \F(\vprod(\t)) \\
    &\quad - \left[ \langle \nabla_{\uprod} \F(\uprod(\t)),\, \z_+ - \uprod(\t) \rangle + \langle \nabla_{\vprod} \F(\vprod(\t)),\, \z_- -\vprod(\t) \rangle \right]\\
    &= D_{\F}(\z_+,\,\uprod(\t)) +  D_{\F}(\z_-,\,\vprod(\t)).
\end{align*}
By a similar calculation as in the proof of Theorem \ref{theorem:Bregman_positive}, we have
\begin{align*}
    \frac{1}{\L}\partial_\t &D_{\F^{\pm}}((\z_+,\,\z_-),\,(\uprod(\t),\,\vprod(\t))) \\
    \qquad &= \langle \A^{\T}( \A(\uprod(\t)-\vprod(\t)) -\y), \z_+-\uprod(\t) \rangle
    - \langle \A^{\T}( \A(\uprod(\t)-\vprod(\t)) -\y), \z_- -\vprod(\t) \rangle\\
    \qquad&= \langle \A(\uprod(\t)-\vprod(\t)) -\y, \A(\z_+-\uprod(\t) - \z_- +\vprod(\t)) \rangle
    = - \| \A(\uprod(\t)-\vprod(\t)) -\y \|_2^2 \\
    \qquad&= -2\mathcal{L}^{\pm}(\uprod(\t)^{\odot\frac{1}{\L}},\vprod(\t)^{\odot\frac{1}{\L}}).
\end{align*}
The same line of argument as in the proof of Theorem \ref{theorem:Bregman_positive} yields $\lim_{\t \to \infty} \mathcal{L}^{\pm}(\uprod(\t)^{\odot\frac{1}{\L}},\,\vprod(\t)^{\odot\frac{1}{\L}})~=~0$, that $\lim_{\t \to \infty} \A(\uprod(\t) - \vprod(\t))~=~\y$, the limit $(\uprodinfty,\vprodinfty):=\lim_{\t \to \infty} (\uprod(\t),\,\vprod(\t))$ exists and both components are non-negative. 
Since the time derivative $\partial_\t D_{\F^{\pm}}$ is identical for all $(\z_+,\,\z_-)\in\Spm$, the difference
\begin{align*}
    \Delta_{\z_+,\,\z_-}
    &= D_{\F^{\pm}}((\z_+,\,\z_-),(\uprod(0),\,\vprod(0))) - D_{\F^{\pm}}((\z_+,\,\z_-),(\uprodinfty,\,\vprodinfty))
\end{align*}
is also identical for all $(\z_+,\,\z_-)\in\Spm$. By non-negativity of $D_{\F^{\pm}}$,
\begin{align*}
    D_{\F^{\pm}}((\z_+,\,\z_-),\,(\uprod(0),\,\vprod(0)))
    \geq \Delta_{\z_+,\,\z_-}
    = \Delta_{\uprodinfty,\,\vprodinfty}
    = D_{\F^{\pm}}((\uprodinfty,\,\vprodinfty),\,(\uprod(0),\,\vprod(0))).
\end{align*}
Expanding the expression we obtain
\begin{align*}
    (\uprodinfty,\vprodinfty)
    &\in\argmin_{(\z_+,\,\z_-)\in\Spm} D_{\F^{\pm}}((\z_+,\,\z_-),\,(\uprod(0),\,\vprod(0)))\\
    &=\argmin_{(\z_+,\,\z_-)\in\Spm} D_{\F}(\z_+,\,\uprod(0)) +  D_{\F}(\z_-,\,\vprod(0))\\
    &=\argmin_{(\z_+,\,\z_-)\in\Spm}
    g_{\uprod(0)}(\z_+) + g_{\vprod(0)}(\z_-)
\end{align*}
by the same calculation as in Theorem \ref{theorem:Bregman_positive}.
\end{proof}

\begin{proof}[Proof of Theorem \ref{theorem:L1_equivalence}]
The proof essentially follows from the proof of Theorem \ref{theorem:L1_equivalence_positive}. Let
\begin{equation*}
    \wprodinfty = \begin{bmatrix}\uprodinfty\\\vprodinfty\end{bmatrix}
    ,\quad \wprod(0) = \begin{bmatrix}\uprod(0)\\\vprod(0)\end{bmatrix}
    ,\quad \tilde{\z} = \begin{bmatrix}\z_+\\\z_-\end{bmatrix}.
\end{equation*}
Let $\w=\wprod(0)^{\odot\frac{2}{\L}-1}$, $\beta_{1} = \|\wprod(0)\|_{\w,1}$, and $\beta_{\min} = \min_{\n\in[\N]}\we_\n\wprode_\n(0)$. By Theorem \ref{theorem:Bregman_general},
\begin{equation*}
    g_{\wprod(0)}(\wprodinfty) \leq g_{\wprod(0)}(\tilde{\z}).
\end{equation*}
Following the same computation as in the proof of Theorem \ref{theorem:L1_equivalence_positive}, we obtain that
\rev{
\begin{align*}
    \|\wprodinfty\|_{\w,1}-\Q
    \leq \frac{\tilde{\g}(\Q,\beta_{\min}) - \tilde{\g}(\Q,\beta_{1})}{\partial_1\tilde{\g}(\Q,\beta_{1})}
    =\Q \cdot 
    \begin{cases}
        \frac{\log(\beta_{1}/\beta_{\min})}{\log(\Q/\beta_{1})} &\text{if }\L=2,\\[6pt]
        \frac{\L(\beta_{1}^{1-\frac{2}{\L}} - \beta_{\min}^{1-\frac{2}{\L}})}{2(\Q^{1-\frac{2}{\L}}- \beta_{1}^{1-\frac{2}{\L}})}&\text{if }\L>2
    \end{cases}
\end{align*}
}
Since $\|\xprodinfty\|_{\w,1} \leq \|\wprodinfty\|_{\w,1}$, the conclusion follows.
\end{proof}
\begin{remark}
Note that $\uprod(0)\neq\vprod(0)$ means that we have different weight for positive and negative components, which gives us a lot of flexibility.
\end{remark}


\section{Numerical Experiment}
\label{sec:Numerical_Experiment}

To evaluate our theoretical results, we conduct several numerical experiments. As outlined below we show recovery of sparse vectors via gradient descent (as approximation of gradient flow) on the overparameterized loss function (or equivalently on the reduced factorized loss function) from the minimal number of Gaussian random measurements as predicted by compressed sensing theory for $\ell_1$-minimization, and thereby
confirm experiments in in previous works \cite{li2021implicit,vaskevicius2019implicit}. Moreover, we conduct experiments in high dimension with random initialization and show that the limit indeed has approximately minimal $\ell_1$ norm as well. 

\subsection{Positive solutions}
\label{subsec:Exp_Positve}


\begin{figure}[t]
    \centering
    \begin{subfigure}[b]{0.45\textwidth}
        \centering
        \includegraphics[width = \textwidth]{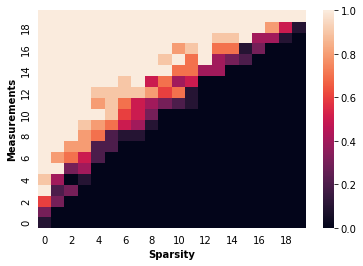}
        \subcaption{$\ell_1$ minimization on $\mathbb{R}_+^{\N}$}
    \end{subfigure}
    \qquad
    \begin{subfigure}[b]{0.45\textwidth}
        \centering
        \includegraphics[width = \textwidth]{Figure/IB_N20_L1_pos.png}
        \subcaption{$\Lover$ minimization via GD ($\L=1$)}
    \end{subfigure}
    \begin{subfigure}[b]{0.45\textwidth}
        \centering
        \includegraphics[width = \textwidth]{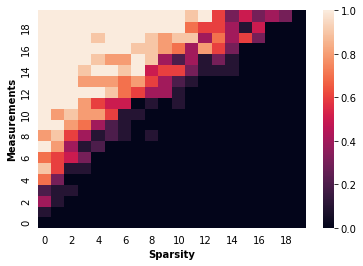}
        \subcaption{$\Lover$ minimization via GD ($\L=2$)}
    \end{subfigure}
    \qquad
    \begin{subfigure}[b]{0.45\textwidth}
        \centering
        \includegraphics[width = \textwidth]{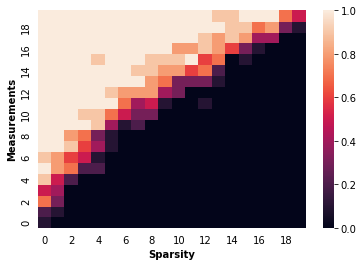}
        \subcaption{$\Lover$ minimization via GD ($\L=3$)}
    \end{subfigure}
    \caption{$\x_*\in\mathbb{R}_+^{\N}$: Comparison of recovery probability between different optimization method.}
    \label{fig:positive}
\end{figure}

In the first experiment, cf.\ Figure \ref{fig:positive}, we consider the simplified setting of Theorem \ref{theorem:L1_equivalence_positive},
where we replace gradient flow with gradient descent in order to have an implementable method. We fix the ambient dimension $\N=20$, vary the number of measurements $\M$ and the sparsity level $s$ of the non-negative ground truth vector $\x_*$ that we wish to recover. The color in the plots represents the probability of successful recovery. Hence, the boundary curves encode the empirically required number of samples in dependence of the sparsity for each algorithm. To ensure fairness, we compare to $\ell_1$-minimization with a positivity constraint on the solution, since we can only recover positive solutions via gradient descent in the setting of Theorem \ref{theorem:L1_equivalence_positive}.

The measurement process $\A\in\mathbb{R}^{\M\times \N}$ and the ground truth $\x_*$ are given by
\begin{align}\label{eq:exp_setting}
    \A = \frac{1}{\sqrt{\M}}{\bf W}, \quad \x_* = \frac{|{\bf b}_S|}{\|{\bf b}_S\|},
\end{align}
where ${\bf W}$ is a standard Gaussian matrix, $S$ is a random subset of $\{1,\dots,\N\}$ of size $s$, which represents the sparsity level, and ${\bf b}_S$ is a standard Gaussian vector supported on $S$. (The index in Figure \ref{fig:positive} and \ref{fig:general} starts from $0$ (instead of $1$) purely due to coding convention.) We take the absolute value of ${\bf b}$ to ensure that the assumption of Theorem \ref{theorem:L1_equivalence_positive} is satisfied. The parameters are set as the following: initialization is set to $\alpha\1$ with $\alpha=10^{-6}$, step size of gradient descent is set to $\eta = 10^{-2}$ and number of iterations equals $T=10^7$. The recovery is considered successful if the resulting error is less than $1\%$, i.e., $\|\xprodinfty - \x_*\|_2\leq 0.01 \|\x_*\|_2$.

We observe that $\ell_1$-minimization (with positivity constraint) and gradient descent on the reduced factorized loss (or equivalently, on the overparameterized loss when initializing identically) behave similarly, whereas gradient descent on the regular quadratic loss barely recovers until $\M=\N$, i.e., when the linear system is fully determined.


\subsection{General solutions}
\label{subsec:Exp_General}


\begin{figure}[t]
    \centering
    \begin{subfigure}[b]{0.45\textwidth}
        \centering
        \includegraphics[width = \textwidth]{Figure/BP_N20_general_2.png}
        \subcaption{$\ell_1$ minimization on $\mathbb{R}^{\N}$}
    \end{subfigure}
    \qquad
    \begin{subfigure}[b]{0.45\textwidth}
        \centering
        \includegraphics[width = \textwidth]{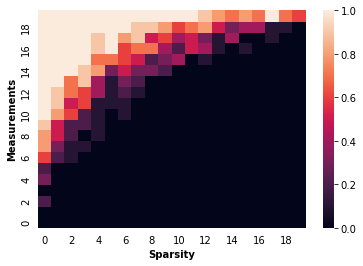}
        \subcaption{$\Lover^{\pm}$ minimization via GD ($\L=2$)}
    \end{subfigure}
    \begin{subfigure}[b]{0.45\textwidth}
        \centering
        \includegraphics[width = \textwidth]{Figure/IB_N20_L3_general_Long.png}
        \subcaption{$\Lover^{\pm}$ minimization via GD ($\L=3$)}
    \end{subfigure}
    \qquad
    \begin{subfigure}[b]{0.45\textwidth}
        \centering
        \includegraphics[width = \textwidth]{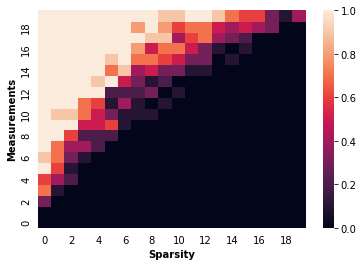}
        \subcaption{$\Lover^{\pm}$ minimization via GD ($\L=4$)}
    \end{subfigure}
    \caption{$\x_*\in\mathbb{R}^{\N}$: Comparison of recovery probability between different optimization method.} 
    \label{fig:general}
\end{figure}

Our second experiment resembles the previous one from Section \ref{subsec:Exp_Positve}, but uses gradient descent on the generalized overparameterized loss function \eqref{eq:L_over_refined} in order to reconstruct general ground truth vectors $\x_*$, which do not need to be non-negative, i.e. $\x_* = \frac{{\bf b}_S}{\|{\bf b}_S\|}$ instead of $\frac{|{\bf b}_S|}{\|{\bf b}_S\|}$. 
We compare with standard $\ell_1$-minimization, i.e., without positivity constraints. As Figure \ref{fig:general} illustrates, the outcome is comparable to the one of the previous experiment, in the sense that GD applied on factorizations performs again similarly to $\ell_1$-minimization.
%

\subsection{Scaling in high dimension}
\label{subsec:Exp_Initialization_Scaling}

%
Recall that $\Q = \min_{\z\in S}\|\z\|_1$. In this experiment we check the influence of the initialization scaling $\alpha$ on the relative optimization error
\begin{equation}\label{eq:exp_setting2}
    \epsilon = \frac{\|\xprodinfty\|_1 - \Q}{\Q}
\end{equation}
in high dimensions. We take $\N=1000$, $\M=150$, and initialization magnitudes $\alpha=10^{-k}$, $k=0,\ldots,6$. The data is generated the same way as in Section \ref{subsec:Exp_General}.

To accelerate the training, we use a line search method to optimize the step size at each iteration. The results in Figure \ref{fig:alpha} show that the accuracy indeed improves as $\alpha$ decreases, which is consistent with Theorem \ref{theorem:L1_equivalence}. The linear behavior in the log-log plot suggests that the relation between error and initialization is indeed polynomial for $\L>2$.

Even though setting $\alpha$ to be small improves the accuracy, such a choice empirically requires more iterations for convergence due to the fact that the origin is a saddle point of the objective function. This yields a trade-off between accuracy and efficiency in the choice of $\alpha$. We observed that the algorithm with least layers, i.e., $\L=2$ converges the fastest but does not give the best results in $\ell_1$-minimization.
\begin{figure}[t]
    \centering
    \begin{subfigure}[b]{0.45\textwidth}
        \centering
        \includegraphics[width = \textwidth]{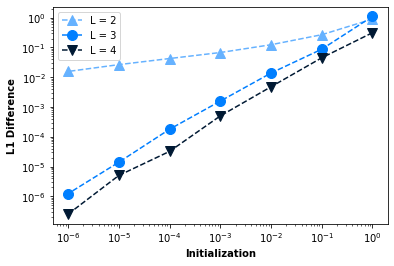}
        \subcaption{Error $\epsilon$ versus initialization in log-log scale for $\L=2,3,4$. For $\L>2$, the curves are roughly linear, which are consistent with Theorem \ref{theorem:L1_equivalence}.}
        \label{fig:alpha}
    \end{subfigure}
    \qquad
    \begin{subfigure}[b]{0.45\textwidth}
        \centering
        \includegraphics[width = \textwidth]{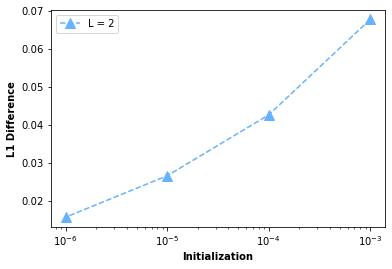}
        \subcaption{Error $\epsilon$ versus initialization in log scale for $\L=2$. Again, the curve is roughly linear, which is consistent with Theorem \ref{theorem:L1_equivalence}.}
        \label{fig:alpha_high_dimension}
    \end{subfigure}
    \caption{Comparison of initialization scaling. Setting: $\N=1000$, $\M=150$. Axes: y-axis = $\epsilon$ defined in \eqref{eq:exp_setting2}, x-axis = $\alpha$ where $\uprod_0=\vprod_0=\alpha\bo$. We can see that for $\L>2$ the error decreases polynomial, while for $\L=2$ the scaling is only logarithmic.}
\end{figure}


\section{Summary and Future Directions}
\label{sec:Summary_Future_Direction}
In the present work, we considered overparametrized square losses for sparse vector recovery from linear measurements. We showed that in these settings vanilla gradient flow exhibits an implicit bias towards solutions that minimize the $\ell_1$-norm among all possible solutions. This lead to near-optimal sampling rates. Several intriguing research questions remain for the future.

First, our theory focuses on gradient flow for minimizing the factorized square loss. However, all conducted experiments use the corresponding gradient descent algorithm instead. A natural next step is thus to extend our results from gradient flow to gradient descent.  

Second, we are also interested in low rank matrix/tensor recovery via overparameterization, which works well empirically but is not yet fully understood. Although sparsity and low-rankness are closely related, one of the significant differences between the Hadamard and the matrix product is that the later is non-commutative in general. In fact, many works \cite{arora2019implicit,chou2020implicit} show that low rankness can be deduced if commutativity is assumed. We believe that extending our proof concepts to the non-commutative matrix case might lead to near-optimal matrix sensing results in this more challenging setting.


\section*{Acknowledgement}
\label{sec:Acknowledgement}
This research was supported through the program "Research in Pairs" by the Mathematisches Forschungsinstitut Oberwolfach (MFO) in 2021. We wish to thank the MFO at this point for their hospitality.
HHC acknowledges funding by the DAAD through the project \emph{Understanding stochastic gradient descent in deep learning} (project no. 57417829).
\rev{We furthermore thank Franziska Schories for helping us to improve the draft by pointing us to some inaccuracies in the proof of Theorem \ref{theorem:L1_equivalence_positive}.}


\section*{Data Availability Statement}
\label{sec:Data Availability Statement}
The data underlying this article will be shared on reasonable request to the corresponding author.


\newpage

\bibliography{references}{}
\bibliographystyle{abbrv}


\newpage
\appendix 

\section{Appendix}
\label{sec:Appendix-Aux}

\subsection{Model reduction}
\label{sec:ModelReduction}



In this section, we formally show that the dynamics of gradient flow on $\Lover$ and $\Lover^\pm$ in \eqref{eq:L_over} and \eqref{eq:L_over_refined} reduce to gradient flow dynamics on the reduced loss functions $\mathcal{L}$ and $\mathcal{L}^\pm$ in \eqref{eq:L} and \eqref{eq:L_refined} if all factors are initialized identically. Although similar statements might already exist in the literature, we provide the full proof for the reader's convenience. We begin with the simpler case of $\Lover$.

\begin{lemma}
\label{lemma:loss_gradient_positive}
    For $\L \geq 2$, let $\xprod:= \bigodot_{\ell\in[\L]}\x^{(\ell)}$ and $\xkc:= \bigodot_{\ell\in[\L]\backslash\{\k\}}\x^{(\ell)}$ for $\k \in [\L]$. Then
    \begin{equation}
    \label{eq:gradient_positive}
        \nabla_{\x^{(\k)}} \Lover\big(\x^{(1)}, \dots, \x^{(\L)}\big)
        = [\A^\T(\A\xprod -\y )] \odot \xkc\;.
    \end{equation}
\end{lemma}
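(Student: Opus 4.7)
The plan is a direct chain rule computation, so I would not expect this to be the difficult sort of lemma; the writeup just needs to keep the Hadamard-product bookkeeping straight.

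First I would write $\Lover = \phi \circ \psi_k$, where $\psi_k : \R^\N \to \R^\N$ sends $\x^{(k)} \mapsto \x^{(k)} \odot \xkc$ (with the remaining factors fixed) and $\phi : \R^\N \to \R$ is the quadratic map $\phi(\z) = \tfrac{1}{2} \|\A \z - \y\|_2^2$. Since $\xprod = \x^{(k)} \odot \xkc = \psi_k(\x^{(k)})$, this decomposition is immediate from the definition \eqref{eq:L_over}. The outer gradient is the standard least-squares one, $\nabla \phi(\z) = \A^\T (\A \z - \y)$, evaluated at $\z = \xprod$.

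Next I would compute the Jacobian of $\psi_k$. Since $(\psi_k(\x^{(k)}))_n = \xe_n^{(k)} \cdot (\xkc)_n$, partial differentiation with respect to $\xe_m^{(k)}$ gives $(\xkc)_n$ when $m=n$ and $0$ otherwise. Hence $D\psi_k(\x^{(k)}) = \diag(\xkc)$, a diagonal (and therefore symmetric) matrix. Applying the chain rule,
\begin{equation*}
    \nabla_{\x^{(k)}} \Lover = D\psi_k(\x^{(k)})^\T \, \nabla \phi(\xprod) = \diag(\xkc) \, \A^\T(\A \xprod - \y) = [\A^\T(\A \xprod - \y)] \odot \xkc,
\end{equation*}
which is exactly the claimed identity.

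There is no real obstacle — the only thing that needs a sentence is noting that multiplication by $\diag(\xkc)$ on a vector is, by definition, the Hadamard product with $\xkc$, so that the matrix-vector expression and the Hadamard expression coincide. I would also briefly remark that the formula is symmetric in the roles of the $\L$ factors, which is the reason the identical-initialization ansatz leads to all $\x^{(k)}(t)$ remaining equal for all $t$ and hence to the reduction to the loss $\mathcal L$ in \eqref{eq:L} used throughout the paper.
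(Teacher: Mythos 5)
Your proof is correct and is essentially the same direct chain-rule computation as the paper's; the paper simply carries it out coordinatewise (differentiating $\frac{1}{2}\sum_m (\A\xprod - \y)_m^2$ with respect to $\xe_n^{(\k)}$) rather than packaging the inner Jacobian as $\diag(\xkc)$. No gaps.
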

\begin{proof}
    By the chain rule we have, for any $n \in [\N]$, that
    \begin{align*}
        \nabla_{\xe_n^{(\k)}} \Lover\big(\x^{(1)}, \dots, \x^{(\L)}\big)
        &= \frac{1}{2}\sum_{m\in[\M]}\nabla_{\xe_n^{(\k)}} \left(\A\xprod-\y\right)_m^2\\
        &= \sum_{m\in[\M]}(\A\xprod -\y )_m (\A)_{mn} (\xkc)_n
        = [\A^\T(\A\xprod -\y )]_n (\xkc)_n\;.
    \end{align*}
    This completes the proof.
\end{proof}
Using Lemma \ref{lemma:loss_gradient_positive}, 
we now show that the dynamics of the factors $\x^{(\k)}$ can be simplified if all factors are identically initialized.

\begin{lemma}[Identical Initialization] \label{lemma:identical_initialization}
Suppose $\x^{(\k)}(\t)$ follows the negative gradient flow
\begin{equation*}
    \left(\x^{(\k)}\right)'(t) = -\nabla_{\x^{(\k)}} \Lover\big(\x^{(1)}, \dots, \x^{(\L)}\big)\;.
\end{equation*}
If all initialization vectors are identical, i.e., $\x^{(\k)}(0) = \x^{(\k')}(0)$ for all $\k,\k' \in [\L]$, then 
$\x^{(\k)}(\t) = \x^{(\k')}(\t)$ for all $t \geq 0$ and all $\k,\k' \in [\L]$. Moreover, with $\x(t) := \x^{(1)}(t) = \dots = \x^{(\L)}(t)$ and $\mathcal{L}(\x) = \frac{1}{2}\|\A\x^{\odot L} -\y\|_2^2$ the dynamics is given by
\begin{equation*}
    \x'(\t)
    = -\nabla\mathcal{L}(\x)\;.
\end{equation*}
\end{lemma}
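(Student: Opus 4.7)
The plan is to combine the symmetry of the gradient formula in Lemma~\ref{lemma:loss_gradient_positive} with uniqueness of solutions to the ODE system. By that lemma, the flow equation for each factor reads
\begin{equation*}
    (\x^{(\k)})'(t) = -\bigl[\A^{\T}(\A\xprod(t) - \y)\bigr] \odot \xkc(t),
\end{equation*}
and the right-hand side depends on $\k$ only through $\xkc = \bigodot_{\ell \neq \k} \x^{(\ell)}$. The crucial observation is that if at some time all factors coincide with a common vector $\x$, then $\xkc = \x^{\odot(\L-1)}$ for every $\k$, so the right-hand sides of all $\L$ equations agree.

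I would then invoke uniqueness to conclude that the factors stay identical for all time. Concretely, define a candidate scalar trajectory $\x(t)$ as the solution of
\begin{equation*}
    \x'(t) = -\bigl[\A^{\T}(\A\x(t)^{\odot \L} - \y)\bigr] \odot \x(t)^{\odot(\L-1)}, \qquad \x(0) = \x^{(1)}(0),
\end{equation*}
which exists at least locally because the right-hand side is polynomial and hence locally Lipschitz. Setting $\x^{(\k)}(t) := \x(t)$ for every $\k$ yields an $\L$-tuple that satisfies the original gradient flow system (by the symmetry observation) and matches the shared initial condition. The full system is also locally Lipschitz, so Picard--Lindel\"of uniqueness implies that this balanced tuple must be the solution, proving that $\x^{(\k)}(t) = \x^{(\k')}(t)$ for all $\k,\k'$ and all $t \geq 0$.

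Finally, I would identify the reduced dynamics. By the chain rule applied to $\mathcal{L}(\x) = \tfrac{1}{2}\|\A\x^{\odot \L} - \y\|_2^2$, one computes
\begin{equation*}
    \nabla \mathcal{L}(\x) = \L\bigl[\A^{\T}(\A\x^{\odot \L} - \y)\bigr] \odot \x^{\odot(\L-1)},
\end{equation*}
which is exactly $\L$ times the per-factor gradient. Substituting into the equation for $\x(t)$ obtained above gives the desired reduced flow (up to the standard convention in the paper that absorbs the constant $\L$ into a time rescaling $t \mapsto \L t$, under which the trajectory is unchanged).

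The main obstacle is really only bookkeeping: one must correctly track the factor $\L$ coming from the chain rule and argue that the local solution extends globally. Global existence is not completely free from the Picard--Lindel\"of argument alone, but it follows from the a-priori bounds that will be established later in the paper via the Bregman divergence (Lemma~\ref{lemma:Bregman_nonincreasing} and Lemma~\ref{lem:Boundedness}), which prevent finite-time blow-up of $\xprod(t)$ and hence of $\x(t)$.
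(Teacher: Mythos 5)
Your proposal is correct and follows essentially the same route as the paper: observe via Lemma~\ref{lemma:loss_gradient_positive} that the right-hand side depends on $\k$ only through $\xkc$, conclude that the balanced tuple is a (hence the) solution, and substitute to get the reduced flow. You are in fact more careful than the paper on two points it leaves implicit --- the Picard--Lindel\"of uniqueness step and global existence --- and you correctly flag the factor of $\L$ by which the per-factor gradient $[\A^{\T}(\A\x^{\odot\L}-\y)]\odot\x^{\odot(\L-1)}$ differs from $\nabla\mathcal{L}(\x)$, a harmless time-reparametrization that the paper's proof silently elides.
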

\begin{proof}
It suffices to show that if $\x^{(\k)} = \x^{(\k')}$ for all $\k,\k' \in [\L]$, then $\nabla_{\x^{(\k)}}\Lover = \nabla_{\x^{(\k')}}\Lover$ for all $\k,\k' \in [\L]$. By Lemma \ref{lemma:loss_gradient_positive}, the only dependence of $\nabla_{\x^{(\k)}}\Lover$ on $\k$ is through $\xkc$. Since $\x^{(\k)}$ and $\x^{(\k')}$ are identical,
\begin{align*}
    \xkc
    &= \bigodot_{\ell\in[\L]\backslash\{\k\}}\x^{(\ell)}
    = \bigodot_{\ell\in[\L]\backslash\{\k'\}}\x^{(\ell)}
    = \x_{(\k')^c}
\end{align*}
and hence $\nabla_{\x^{(\k)}}\Lover = \nabla_{\x^{(\k')}}\Lover$. Since by assumption all $\x^{(\k)}$ are identical, we can replace them with $\x$. Hence $\xprod = \x^{\odot L}$ and $\xkc = \x^{\odot (L-1)}$. Plugging this into \eqref{eq:gradient_positive}, we get that $\x'(\t) = -\left[\A^{\T}( \A\x^{\odot \L} -\y)\right] \odot \x^{\odot (\L-1)}$, which is exactly $\nabla\mathcal{L}(\x)$.
\end{proof}

At first sight, the reduction of the number of parameters in Lemma \ref{lemma:identical_initialization} may seem counter-intuitive to the idea of overparameterization. However, as opposed to the standard loss function 
$\Lquad(\x) = \frac{1}{2}\|\A\x -\y\|_2^2$, we arrive at an alternative loss function $\mathcal{L}(\x) = \frac{1}{2}\|\A\x^{\odot \L} -\y\|_2^2$ induced by the overparameterization and having a different optimization landscape than $\Lquad$.




Along similar lines it is straight-forward to check that the gradient flow of the general overparameterized loss function $\Lover^\pm$ in \eqref{eq:L_over_refined} is equivalent to the flow of the reduced factorized loss $\mathcal{L}^\pm$ in \eqref{eq:L_refined} if all $\u^{(\k)}$ and $\v^{(\k)}$ are initialized identically. 
Note that the partial gradients of $\mathcal{L}^\pm$ are given by 
    \begin{align*}
        \nabla_\u \mathcal{L}^\pm(\u,\v) &= \L \left[\A^{\T}( \A(\u^{\odot \L} - \v^{\odot L}) -\y)\right] \odot \u^{\odot L-1} \\
        \text{and} \qquad
        \nabla_\v \mathcal{L}^\pm(\u,\v) &= - \L \left[\A^{\T}( \A(\u^{\odot \L} - \v^{\odot L}) -\y)\right] \odot \v^{\odot L-1}.
    \end{align*}

\subsection{Noise robust compressed sensing}
\label{subsec:Noise robust compressed sensing}

For completeness we provide a proof of Theorem~\ref{thm:noisy-cs}. 
\begin{proof}[Proof of Theorem~\ref{thm:noisy-cs}]
The $\ell_2$-robust null space property of order $c \s_*$ with respect to $\ell_2$ with constants $\rho \in (0,1)$ and $\tau$ as in \eqref{robust-nsp} together with the $\ell_1$-quotient property implies the so-called simultaneous $(\ell_2,\ell_1)$-quotient property by \cite[Lemma 11.15]{foucart2013compressed}, i.e., for any $\e \in \R^\M$ there exists $\u \in \R^\M$ such that $\A \e = \u$ and both
\begin{equation}\label{simul-quotient-prop}
\|\u\|_2 \leq d' \|\e\|_2 \quad \mbox{ and } \quad \|\u\|_1 \leq d \sqrt{\s_*} \|\e\|_2,
\end{equation}
where the constant $d'$ only depends on $d, c, \rho, \tau$.  Let $\Delta : \R^M \to \R^N$ the reconstruction map represented by gradient flow, i.e., $\Delta(\y')$ is the limit of gradient flow (initialized with parameter $\u_0,\v_0$) for the functional $\Lover^\pm$, the latter depending on $\y'$. 
We know from Theorem~\ref{theorem:L1_equivalence} that $\A(\Delta(\y)) = \y$ for any $\y$ (the limit $\xprodinfty$ is contained in $S$).
By \cite[Theorem 4.25]{foucart2013compressed} the $\ell_2$-robust null space property of $\A$ implies
that
\begin{equation}\label{robust-nsp-consequence}
\|\v - \w\|_2 \leq \frac{C}{\sqrt{s}}\left(\|\v\|_1 -\|\w\|_1 + 2 \sigma_{s}(\w)_1\right) + D \|\A(\v-\w)\|_2 \quad \mbox{ for all } \v,\w \in \R^\N.
\end{equation}
For the noise vector $\e$ of the theorem, let $\u$ be the vector such that $\A \u = \e$ and \eqref{simul-quotient-prop} holds. Note that $\y = \A \x_* + \e = \A(\x_+ + \u)$. Further let, $\widehat{\z}$ be a minimizer of $\min_{\z : \A \z = \A(\x_* + \u)} \|\z\|_1$. By Theorem~\ref{theorem:L1_equivalence} we have
\begin{equation}\label{bound:main:thm}
\|\w\|_1 - \|\widehat{\z}\|_1 \leq \epsilon\|\widehat{\z}\|_1.
\end{equation}
Applying inequality \eqref{robust-nsp-consequence} for $\v = \x_*+\u$ and $\w = \Delta(\A(\x_*+\u))$ below (noting that $\A\v = \A \w$ as argued above) yields
\begin{align*}
\|\x_* - \Delta(\y)\|_2 
& = \|\x_* - \Delta(\A(\x_* + \u)\|_2 \leq \|\x_* + \u - \Delta(\A(\x_+ + \u))\|_2 + \|\u\|_2 \\
& \leq \frac{C}{\sqrt{s}} \left(\|\w\|_1 - \|\x_* + \u\|_1 + 2 \sigma_s(\x_* + \u)_1\right) + \|\u\|_2\\
& \leq \frac{C}{\sqrt{s}} \left(\|\w\|_1 - \|\widehat{\z}\|_1 + \|\widehat{\z}\|_1 -  \|\x_* + \u\|_1 + 2 \sigma_s(\x_*)_1 + \|\u\|_1\right) + \|\u\|_2.
\end{align*}
Since $\widehat{\z}$ minimizes the $\ell_1$-norm among all vectors satisfying $\A\z = \A(\x_* + \u)$ we have $\|\x_* + \u\|_1 \geq \|\widehat{\z}\|_1$. Using \eqref{simul-quotient-prop} and \eqref{bound:main:thm}
and that $\s = c \s_*$ gives
\begin{align*}
    \|\x_* - \Delta(\y)\|_2
    &\leq \frac{C}{\sqrt{\s}}\left(\epsilon\|\widehat{\z}\|_1 + 2 \sigma_s(\x_*)_1\right) + \frac{Cd\sqrt{\s_*}}{\sqrt{\s}} \|\e\|_2 + d' \|\e\|_2\\
    &= \frac{C}{\sqrt{\s}}\left(\epsilon\|\widehat{\z}\|_1+ 2 \sigma_s(\x_*)_1\right) + C' \|\e\|_2
\end{align*}
with $C' = Cd\sqrt{\s_*}/\sqrt{\s} + d'$. This completes the proof of Theorem~\ref{thm:noisy-cs}.
\end{proof}


\end{document}